\documentclass[fleqn,preprint,3p,a4paper]{elsarticle}

\usepackage{amssymb}
\usepackage{amsmath}
\usepackage{amsthm}
\usepackage{dcolumn}
\usepackage{endnotes}
\usepackage{tabularx}
\usepackage[matrix,arrow]{xy}
\usepackage{wasysym}

\newtheorem{theorem}{Theorem}[section]
\newtheorem{proposition}[theorem]{Proposition}
\newtheorem{lemma}[theorem]{Lemma}
\newtheorem{corollary}[theorem]{Corollary}

\theoremstyle{definition}
\newtheorem{definition}[theorem]{Definition}
\newtheorem{example}[theorem]{Example}
\newtheorem{remark}[theorem]{Remark}
\newtheorem{question}[theorem]{Question}

\newcommand{\II}{\begin{enumerate}}
\newcommand{\III}{\end{enumerate}}

\newcommand{\ir}{{\mathsf{Irr}}}

\newcommand{\ua}{\mathord{\uparrow}}

\newcommand{\mk}{\mathord{Q}}
\newcommand{\wdd}{\mathord{\mathsf{WD}}}

\journal{}

\begin{document}

\begin{frontmatter}

%% Title, authors and addresses

%% use the tnoteref command within \title for footnotes;
%% use the tnotetext command for theassociated footnote;
%% use the fnref command within \author or \address for footnotes;
%% use the fntext command for theassociated footnote;
%% use the corref command within \author for corresponding author footnotes;
%% use the cortext command for theassociated footnote;
%% use the ead command for the email address,
%% and the form \ead[url] for the home page:
%% \title{Title\tnoteref{label1}}
%% \tnotetext[label1]{}
%% \author{Name\corref{cor1}\fnref{label2}}
%% \ead{email address}
%% \ead[url]{home page}
%% \fntext[label2]{}
%% \cortext[cor1]{}
%% \address{Address\fnref{label3}}
%% \fntext[label3]{}

\title{Some new results on well-filteredness of $T_0$-spaces\tnoteref{t1}}
\tnotetext[t1]{This research was supported by the National Natural Science Foundation of China (Nos. 12471070, 12071199)}

%% use optional labels to link authors explicitly to addresses:
%% \author[label1,label2]{}
%% \address[label1]{}
%% \address[label2]{}
\author[X. Xu]{Xiaoquan Xu}
\ead{xiqxu2002@163.com}
\address[X. Xu]{School of Mathematics and Statistics, Minnan Normal University, Zhangzhou 363000, China}

\begin{abstract}
For a $T_0$-space $X$, let $Q (X)$ be the poset of nonempty compact saturated sets of $X$ with the reverse inclusion order. The space $X$ is said to have property Q if it satisfies the following two conditions: (1) $\wedge K$ exists for any $K\in Q(X)$, and (2) for any filtered family $\{K_d : d\in D\}\subseteq Q(X)$ and $x\in X$, if $\bigvee^{\uparrow}_{d\in D}\bigwedge K_d$ exists and $x\not\leq \bigvee^{\uparrow}_{d\in D}\bigwedge K_d$, then there is $\varphi\in \prod\limits_{d\in D}\!\!K_d$ and an upper bound $u$ of $\varphi(D)$ such that $x\not\leq u$. In this paper, we prove that every $d$-space with property Q is well-filtered and the Smyth power space of a $T_0$-space always has property Q. Hence the Smyth power construction preserves the well-filteredness. For a complete lattice $L$ and an order-compatible $d$-topology $\tau$ on it, we show that when $L$ possesses a certain distributivity, $(L, \tau)$ is well-filtered.
\end{abstract}

\begin{keyword}
Well-filtered space; $d$-space; topological Rudin lemma; property Q; Smyth power space

\MSC 54D10; 54D30; 54B20; 06F30

\end{keyword}

%%Graphical abstract
%\begin{graphicalabstract}
%\includegraphics{grabs}
%\end{graphicalabstract}

%%Research highlights
%\begin{highlights}
%\item Research highlight 1
%\item Research highlight 2
%\end{highlights}

%% MSC codes here, in the form:
%% or \MSC[2008] code \sep code (2000 is the default)

\end{frontmatter}

%% \linenumbers

%% main text

In domain theory, the $d$-spaces and sober spaces are two important classes of $T_0$-spaces (see \cite{Ershov-1999, GHKLMS-2003, Goubault-2013, Keimel-Lawson-2009, Wyler-1981, Xu-Zhao-2020, Xu-Zhao-2021}). With the development of domain theory and non-Hausdorff topology, the well-filteredness also emerged as a very useful and important property for $T_0$-spaces (see [3, 5, 8, 9, 11, 12, 15, 17-28]). Several characterizations of well-filtered spaces were given in \cite{Xi-Lawson-2017, Xi-Zhao-2017, Xu-Shen-Xi-Zhao-2020-2, Xu-Zhao-2020}.

%Rudin's Lemma plays a crucial role in domain theory (see \cite{Abramsky-Jung-1994, GHKLMS-2003, Gierz-Lawson-Stralka-1983, Goubault-2013}). In \cite{Heckmann-Keimel-2013}, Heckmann and Keimel presented a topological variant of Rudin's Lemma. It has been used to study the various aspects of well-filtered spaces and sober spaces (see \cite{Heckmann-Keimel-2013, Xu-2021-1, Xu-Shen-Xi-Zhao-2020-2, Xu-Wen-Xi-2023, Xu-Zhao-2020, Xu-Zhao-2021}).

%Based on the topological Rudin lemma and Xi and Lawson's work \cite{Xi-Lawson-2017} on well-filtered spaces, Xu and Zhao \cite{Xu-Zhao-2020} obtained a series of characterizations of well-filtered spaces and sober spaces. Such characterizations led them to introduce and study a new class of spaces --- strong $d$-spaces. As a strengthened version of $d$-spaces, the strong $d$-space possesses some important properties (see \cite{Xu-Zhao-2020, Xu-Zhao-2021}). In particular, Xu and Zhao \cite{Xu-Zhao-2021} showed that if the Scott space $\Sigma~\!\!P$ of a poset $P$ is a strong $d$-space, then it is well-filtered. Lawson and Xu proved that if the Scott space $\Sigma P$ of a poset $P$ is a strong $d$-space and $\Sigma (P\times P)=\Sigma P\times\Sigma P$, then $\Sigma P$ is sober, which generalizes a well-known result which states that the Scott space $\Sigma~\!\!L$ of a complete lattice $L$ is sober if $\Sigma (L\times L)=\Sigma L\times\Sigma L$ (cf. \cite[Corollary II-1.12,  Corollary II-4.15 and its proof]{GHKLMS-2003}).

The main purpose of this paper is to give some sufficient conditions for a $T_0$-space to be well-filtered by introducing and studying a new property of $T_0$-spaces --- property Q. The paper is organized as follows.

Section 1 provides some fundamental definitions and notations about topology and lattice-ordered structures which will be used in the whole paper. Also a few basic properties of the poset of
all nonempty compact saturated subsets of a $T_0$-space and some characterizations of $d$-spaces and sober spaces are listed.

In Section 2, we briefly recall the concept and some basic results of well-filtered spaces. A filtered version of topological Rudin lemma is given, which is an important tool for the study of well-filtered spaces.

In Section 3, we introduce the property Q for $T_0$-spaces. It is proved that every $d$-space with property Q is well-filtered and the Smyth power space of a $T_0$-space always has property Q. Therefore, the Smyth power space of a well-filtered space is well-filtered.

Section 4 gives an application of the main result of Section 3. It is shown that for a complete lattice $L$ and an order-compatible $d$-topology $\tau$ on it, if $L$ possesses a certain distributivity, $(L, \tau)$ is well-filtered. A question about the well-filteredness of order-compatible $d$-topologies on a complete lattice is posed.

\section{Preliminaries}

For a poset $P$ and $A\subseteq P$, let
$\mathord{\downarrow}A=\{x\in P: x\leq  a \mbox{ for some }
a\in A\}$ and $\mathord{\uparrow}A=\{x\in P: x\geq  a \mbox{
	for some } a\in A\}$. For  $x\in P$, we write
$\mathord{\downarrow}x$ for $\mathord{\downarrow}\{x\}$ and
$\mathord{\uparrow}x$ for $\mathord{\uparrow}\{x\}$.  A subset $A$
is called a \emph{lower set} (resp., an \emph{upper set}) if
$A=\mathord{\downarrow}A$ (resp., $A=\mathord{\uparrow}A$). Let $P^{(<\omega)}=\{F\subseteq P : F \mbox{~is a nonempty finite set}\}$ and $\mathbf{Fin}~\!P=\{{\uparrow} F : F\in P^{(<\omega)}\}$. For a nonempty subset $B$ of $P$, let $max (B)=\{b\in B : b \mbox{~ is a maximal element of~} B\}$ and $min (B)=\{b\in B : b \mbox{~ is a minimal element of~} B\}$.  An element $u$ is said to be an \emph{upper bound} of a set $C$, if $c\leq u$ for all $c\in C$ (i.e., $C\subseteq {\downarrow} u$). The set of all upper bound of $C$ is denoted by $C^{\uparrow}$. For $E\subseteq P$, if the set of upper bounds of $E$ in $P$ has a unique smallest element, we call this element the \emph{least upper bound} and write it as $\vee E$ or sup~\!\! $E$ (for \emph{supremum}). Dually, the \emph{greatest lower bound} of $E$ in $P$ is written as $\wedge E$ or inf~\!\! $E$ (for \emph{infimum}). The poset $P$ is called an \emph{inf semilattice} (shortly \emph{semilattice}) if for any two elements
$a, b\in P$, $a\wedge b$ exists in $P$. Dually, $P$ is a \emph{sup semilattice} if for any two elements $a, b\in P$, $a\vee b$ exists in $P$.
The set of all natural
numbers is denoted by $\mathbb{N}$. Let $\mathbb{N}^+=\mathbb{N}\setminus \{0\}$. For a set $X$, let $|X|$ be the cardinality of $X$ and $2^X$ the set of all subsets of $X$.

A nonempty subset $D$ of a poset $Q$ is \emph{directed} if every two
elements in $D$ have an upper bound in $D$. The set of all directed sets of $Q$ is denoted by $\mathcal D(Q)$.  The poset $Q$ is called a \emph{directed complete poset}, or \emph{dcpo} for short, if for any
$D\in \mathcal D(Q)$, its supremum $\vee D$ exists in $Q$. The notion $x=\bigvee^{\uparrow}G$ is a convenient device to express that, firstly, the set $G$ is directed and, secondly, $x$ is its leat upper bounded.

%The poset $Q$ is called a \emph{bounded complete dcpo}, if $Q$ is a dcpo and every subset of $Q$ that is bounded above has a least upper bound.

%\begin{lemma}\label{lem-bounde-complete-complete-semilattice} (\cite[Proposition O-2.2(iv)]{GHKLMS-2003}) For a poset $P$, the following two conditions are equivalent:
%\begin{enumerate}[\rm (1)]
%\item $P$ is a bounded complete dcpo.
%\item $P$ is a dcpo and every nonempty subset of $P$ has an inf.
%\end{enumerate}
%\end{lemma}

% The set of all natural numbers is denoted by $\mathbb{N}$. Let $\mathbb{N}^{+}=\mathbb{N}\setminus \{0\}$. When $\mathbb{N}$ is regarded as a poset (in fact, a chain), the order on $\mathbb{N}$ is the usual order of natural numbers.

A subset $U$ of a poset $P$ is \emph{Scott open} if
(i) $U=\mathord{\uparrow}U$, and (ii) for any directed subset $D$ for
which $\vee D$ exists, $\vee D\in U$ implies $D\cap
U\neq\emptyset$. All Scott open subsets of $P$ form a topology,
and we call this topology  the \emph{Scott topology} on $P$ and
denote it by $\sigma(P)$. The space $\Sigma~\!\! P=(P,\sigma(P))$ is called the
\emph{Scott space} of $P$. The \emph{upper topology} on a poset $P$, generated
by $\{P\setminus {\downarrow}x : x\in P\}$ (as a subsase), is denoted by $\upsilon (P)$. The upper sets
form the (\emph{upper}) \emph{Alexandroff topology} $\alpha (P)$.

For a $T_0$-space $X$, we use $\leq_X$ to represent the \emph{specialization order} of $X$, that is, $x\leq_X y$ if{}f $x\in \overline{\{y\}}$.  We will use $\Omega~\!\!X$ or even $X$ to denote the poset $(X, \leq_X)$. In what follows, when a $T_0$-space is considered as a poset, the order always refers to the specialization order if no other explanation is given. For a poset $P$, a $T_0$-topology $\tau$ on $P$ is said to be \emph{order}-\emph{compatible} if $\leq_{\tau}$ agrees with the original order on $P$. It is easy to verify that $\tau$ is order-compatible iff $\upsilon (P)\subseteq \tau\subseteq \alpha (P)$. Let $\mathcal O(X)$ (resp., $\Gamma(X)$) be the set of all open subsets (resp., closed subsets) of $X$, and let $\mathcal S^u(X)=\{\ua x : x\in X\}$, $\mathcal S_c(X)=\{\overline{{\{x\}}} : x\in X\}$ and $\mathcal D_c(X)=\{\overline{D} : D\in \mathcal D(X)\}$.

A $T_0$-space $X$ is called a \emph{$d$-space} (or \emph{monotone convergence space}) if $X$ (with the specialization order) is a dcpo and $\mathcal O(X) \subseteq \sigma(X)$ (cf. \cite{GHKLMS-2003, Wyler-1981}). For a set $Y$ and a topology $\tau$ on $Y$, we call $\tau$ a $d$-\emph{topology} on $Y$ if $(Y, \tau)$ is a $d$-space. A topology $\delta$ on a poset $P$ is said to be an \emph{order}-\emph{compatible} $d$-\emph{topology} if $\delta$ is order-compatible and $(P, \delta)$ is a $d$-space. Clearly, $\delta$ is an  order-compatible $d$-topology on $P$ iff $P$ is a dcpo and $\upsilon (P)\subseteq \delta \subseteq \sigma (P)$.

\begin{proposition}\label{prop-d-space-charac} (\cite[Proposition 3.3]{Xu-Shen-Xi-Zhao-2020-2}) For a $T_0$-space $X$, the following conditions are equivalent:
\begin{enumerate}[\rm (1)]
\item $X$ is a $d$-space.
            \item $\mathcal D_c(X)=\mathcal S_c(X)$.
            \item  For any $D\in \mathcal D(X)$ and $U\in \mathcal O(X)$, $\bigcap_{d\in D}\ua d\subseteq U$ implies $\ua d \subseteq U$ (i.e., $d\in U$) for some $d\in D$.
\end{enumerate}
\end{proposition}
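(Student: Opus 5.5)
We prove the cycle (1)$\Rightarrow$(3)$\Rightarrow$(2)$\Rightarrow$(1), using only that the specialization order $\leq_X$ makes closed sets lower sets and open sets upper sets.

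\emph{(1)$\Rightarrow$(3).} Let $X$ be a $d$-space, $D\in\mathcal D(X)$, $U\in\mathcal O(X)$ and $\bigcap_{d\in D}\ua d\subseteq U$. Since $X$ is a dcpo, $s=\bigvee D$ exists, and $s$ is an upper bound of $D$, so $s\in\bigcap_{d\in D}\ua d\subseteq U$. Because $\mathcal O(X)\subseteq\sigma(X)$, $U$ is Scott open, and $\bigvee D\in U$ gives some $d\in D$ with $d\in U$. As $U$ is an upper set, $\ua d\subseteq U$.

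\emph{(3)$\Rightarrow$(2).} The inclusion $\mathcal S_c(X)\subseteq\mathcal D_c(X)$ is trivial, since $\{x\}$ is directed. For the converse, fix $D\in\mathcal D(X)$. We must show that $\bigcap_{d\in D}\ua d$ meets $\overline{D}$. If it did not, then $\bigcap_{d\in D}\ua d\subseteq X\setminus\overline D$, which is open, so (3) gives some $d\in D$ with $d\notin\overline D$, which is absurd. So pick $x\in\overline D\cap\bigcap_{d\in D}\ua d$. Then $D\subseteq{\downarrow}x=\overline{\{x\}}$, hence $\overline D\subseteq\overline{\{x\}}$. Also $x\in\overline D$ gives $\overline{\{x\}}\subseteq\overline D$. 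Therefore $\overline D=\overline{\{x\}}$.

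\emph{(2)$\Rightarrow$(1).} Given $D\in\mathcal D(X)$, write $\overline D=\overline{\{x\}}$. We first show $x=\bigvee D$.
\begin{itemize}
\item $D\subseteq\overline{\{x\}}={\downarrow}x$, so $x$ is an upper bound of $D$.
\item If $u$ is any upper bound of $D$, then $D\subseteq{\downarrow}u=\overline{\{u\}}$, which is closed, so $x\in\overline D\subseteq\overline{\{u\}}$, i.e.\ $x\leq u$.
\end{itemize}
Thus $X$ is a dcpo. Next, let $U$ be open with $\bigvee D=x\in U$. If $D\cap U=\emptyset$, then $D\subseteq X\setminus U$, which is closed, so $x\in\overline D\subseteq X\setminus U$, a contradiction. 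Hence $U$ is Scott open.

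The only step requiring a real idea is (3)$\Rightarrow$(2): one applies (3) to the open set $X\setminus\overline D$ to produce a point of $\overline D$ lying above every element of $D$. The other two implications are routine unwinding of definitions.
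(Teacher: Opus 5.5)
Your proof is correct and complete. Each implication checks out, and in (3)$\Rightarrow$(2) applying (3) to the open set $X\setminus\overline{D}$ also covers the case $\bigcap_{d\in D}\ua d=\emptyset$. The paper gives no proof of its own here (it only quotes Proposition 3.3 of Xu--Shen--Xi--Zhao), so there is nothing to compare against, and your cycle (1)$\Rightarrow$(3)$\Rightarrow$(2)$\Rightarrow$(1) is a correct self-contained direct argument.
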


%\begin{lemma}\label{lem-d-space-max}
%If $X$ is a $d$-space and $A$ is a nonempty closed subset of $X$, then $\mbox{max}(A)\neq\emptyset$ and $A=\downarrow \mbox{max}(A)$.
%\end{lemma}
%\begin{proof} For $a\in A$, by Zorn's Lemma there is a maximal chain $C$ containing $a$ in $A$. Since $X$ is a $d$-space, $c=\vee C$ exists and $c\in A$. By the maximality of $C$, we have $c\in max (A)$ and $a\leq c$. So $\mbox{max}(A)\neq\emptyset$ and $A=\downarrow \mbox{max}(A)$.
%\end{proof}

A nonempty subset $A$ of a $T_0$-space $X$ is said to be \emph{irreducible} if for any $\{F_1, F_2\}\subseteq \Gamma(X)$, $A \subseteq F_1\cup F_2$ implies $A \subseteq F_1$ or $A \subseteq  F_2$.  Denote by $\ir(X)$ (resp., $\ir_c(X)$) the set of all irreducible (resp., irreducible closed) subsets of $X$. Clearly, every directed subset of $X$ (with the specialization order) is irreducible. The space $X$ is called \emph{sober}, if for any  $A\in\ir_c(X)$, there is a unique point $x\in X$ such that $A=\overline{\{x\}}$.

\begin{proposition}\label{prop-sober-charac} (\cite[Proposition 5.7]{Xu-Shen-Xi-Zhao-2020-2})) For a $T_0$-space $X$, the following conditions are equivalent:
\begin{enumerate}[\rm (1)]
\item $X$ is sober.
            \item $\ir_c (X)=\mathcal S_c(X)$.
            \item  For any $A\in \ir(X)$ and $U\in \mathcal O(X)$, $\bigcap_{a\in A}\ua a\subseteq U$ implies $\ua a \subseteq U$ for some $a\in A$.
\end{enumerate}
\end{proposition}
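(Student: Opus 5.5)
We prove the cycle of implications (1)$\Rightarrow$(2)$\Rightarrow$(3)$\Rightarrow$(1). Throughout we use two standard facts about the specialization order. First, $\overline{\{x\}}={\downarrow}x$. Second, every open set is an upper set, so for any subset $A$ we have ${\uparrow}a\subseteq U$ iff $a\in U$.

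For (1)$\Rightarrow$(2), note that $\mathcal S_c(X)\subseteq \ir_c(X)$ always holds, since a point closure is irreducible. Conversely, if $A\in\ir_c(X)$, sobriety gives $x$ with $A=\overline{\{x\}}$, so $A\in\mathcal S_c(X)$.

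For (2)$\Rightarrow$(3), take $A\in\ir(X)$ and $U\in\mathcal O(X)$, and suppose $a\notin U$ for every $a\in A$. Then $A\subseteq X\setminus U$. The closure $\overline{A}$ is irreducible: if $\overline A\subseteq F_1\cup F_2$, then $A\subseteq F_1$ or $A\subseteq F_2$, and hence $\overline A$ lies in the same $F_i$. By (2), $\overline A={\downarrow}x$ for some $x$. Since $\overline A\subseteq X\setminus U$, we get $x\notin U$. Every $a\in A$ satisfies $a\le x$, so $x\in\bigcap_{a\in A}{\uparrow}a$. Thus $\bigcap_{a\in A}{\uparrow}a\not\subseteq U$, which is the contrapositive of (3).

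For (3)$\Rightarrow$(1), let $A\in\ir_c(X)$.

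\emph{Existence of $x$.} Let $U=X\setminus A$, which is open. No $a\in A$ lies in $U$, so by (3) $\bigcap_{a\in A}{\uparrow}a\not\subseteq U$. Hence there is $x\notin U$, i.e.\ $x\in A$, with $a\le x$ for all $a\in A$. Then $A\subseteq{\downarrow}x$, and ${\downarrow}x=\overline{\{x\}}\subseteq A$ because $A$ is closed and contains $x$. So $A=\overline{\{x\}}$.

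\emph{Uniqueness of $x$.} If $\overline{\{x\}}=\overline{\{y\}}$, then $x\le y\le x$, and $T_0$ gives $x=y$.

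The only step needing any care is the existence step in (3)$\Rightarrow$(1), where we apply (3) to the open complement of $A$ itself to produce a top element of $A$. Irreducibility of $A$ is used only through the hypothesis of (3).
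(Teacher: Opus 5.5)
Your proof is correct: each implication holds, including the closure-of-irreducible step in $(2)\Rightarrow(3)$ and the application of (3) with $U=X\setminus A$ to obtain a largest element of $A$ in $(3)\Rightarrow(1)$. The paper gives no proof of its own (it only quotes \cite[Proposition 5.7]{Xu-Shen-Xi-Zhao-2020-2}), and your cycle $(1)\Rightarrow(2)\Rightarrow(3)\Rightarrow(1)$ is the standard argument for this characterization.
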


For the sobrity of upper topology on a poset, we have the following result.

\begin{proposition}\label{prop-upper-topology-WF} (\cite[Proposition 2.9]{Xu-Shen-Xi-Zhao-2020-2}) For a poset $P$, the space $(P, \upsilon (P))$ is sober iff  $\bigvee A$ exists in $P$ for any $A\in\ir ((P, \upsilon (P))$. Therefore, for any complete lattice $L$, $(L, \upsilon (L))$ is sober.
\end{proposition}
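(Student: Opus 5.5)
We need to prove Proposition \ref{prop-upper-topology-WF}: $(P,\upsilon(P))$ is sober iff every irreducible subset $A$ of $(P,\upsilon(P))$ has a supremum in $P$; consequently $(L,\upsilon(L))$ is sober for every complete lattice $L$. The plan is to use the characterization of sobriety in Proposition \ref{prop-sober-charac}(2), $\ir_c(X)=\mathcal S_c(X)$, together with two easy facts about the upper topology. First, $\upsilon(P)$ is order-compatible, so $\overline{\{x\}}={\downarrow}x$. Second, every closed set of $(P,\upsilon(P))$ is an intersection of finite unions of principal ideals ${\downarrow}F$ with $F$ finite, since the sets $P\setminus{\downarrow}x$ form a subbase. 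A further fact we will use is that $A$ and $\overline{A}$ are simultaneously irreducible.

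\textbf{Necessity.} Assume $(P,\upsilon(P))$ is sober and let $A\in\ir((P,\upsilon(P)))$. Then $\overline{A}$ is irreducible closed, so $\overline{A}={\downarrow}x$ for some $x$. We claim $x=\bigvee A$.
\begin{enumerate}
\item $x$ is an upper bound of $A$, because $A\subseteq{\downarrow}x$.
\item If $u$ is any upper bound of $A$, then $A\subseteq{\downarrow}u$. The set ${\downarrow}u$ is closed in $\upsilon(P)$, so ${\downarrow}x=\overline{A}\subseteq{\downarrow}u$, giving $x\le u$.
\end{enumerate}

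\textbf{Sufficiency.} Assume every irreducible set has a supremum, and let $A\in\ir_c((P,\upsilon(P)))$. Put $a=\bigvee A$. Since $A\subseteq{\downarrow}a$, it suffices to show ${\downarrow}a\subseteq A$, i.e.\ $a\in A$ (as $A$ is a lower set); then $A={\downarrow}a=\overline{\{a\}}$. Uniqueness of $a$ follows from $T_0$.

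This step is the main obstacle, because $A$ is only known to be an intersection of basic closed sets ${\downarrow}F$, and we must show $a$ lies in each of them. Fix a basic closed set $C={\downarrow}\{f_1,\dots,f_n\}=\bigcup_i{\downarrow}f_i$ with $A\subseteq C$. By irreducibility, $A\subseteq{\downarrow}f_i$ for some $i$. Then $f_i$ is an upper bound of $A$, so $a\le f_i$ and $a\in C$. Since $A$ is closed, it is the intersection of all such $C$ containing it, hence $a\in A$.

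\textbf{Complete lattices.} The final assertion is immediate, since in a complete lattice every subset, in particular every irreducible one, has a supremum.
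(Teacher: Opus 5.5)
Your proof is correct: necessity follows because ${\downarrow}u$ is $\upsilon(P)$-closed for every upper bound $u$ of $A$. Sufficiency follows because irreducibility shrinks each basic closed set ${\downarrow}F\supseteq A$ to a single ${\downarrow}f$ with $\bigvee A\leq f$, which forces $\bigvee A\in A$. The paper gives no proof of its own here (it quotes Proposition 2.9 of Xu--Shen--Xi--Zhao), and your argument is the standard one for this result.
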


%The following two lemmas on irreducible sets are well-known.

%\begin{lemma}\label{irrsubspace}
%Let $X$ be a space and $Y$ a subspace of $X$. Then the following conditions are equivalent for a
%subset $A\subseteq Y$:
%\begin{enumerate}[\rm (1)]
%	\item $A$ is an irreducible subset of $Y$.
%	\item $A$ is an irreducible subset of $X$.
%	\item ${\rm cl}_X A$ is an irreducible subset of $X$.
%\end{enumerate}
%\end{lemma}

%\begin{lemma}\label{irrimage}
%	If $f : X \longrightarrow Y$ is continuous and $A\in\ir (X)$, then $f(A)\in \ir (Y)$.
%\end{lemma}

A subset $A$ of a $T_0$-space $X$ is called \emph{saturated} if it equals the intersection of all open sets containing it (equivalently, $A$ is an upper set with respect to the specialization order). We use $\mathord{Q}(X)$ to
denote the set of all nonempty compact saturated subsets of $X$ and endow it with the \emph{Smyth order} $\sqsubseteq$: $K_1\sqsubseteq K_2$ if{}f $K_2\subseteq K_1$ for all $K_1,K_2\in \mathord{Q}(X)$.

\begin{lemma}\label{lem-sups-in-Smyth} Let $X$ be a $T_0$-space and $\{K_i : i\in I\}\subseteq \mk (X)$ a nonempty family. Then $\bigvee_{i\in I} K_i$ exists in $\mk (X)$ if{}f~$\bigcap_{i\in I} K_i\in \mk (X)$. In this case $\bigvee_{i\in I} K_i=\bigcap_{i\in I} K_i$.
\end{lemma}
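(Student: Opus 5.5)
The plan is to prove the two implications directly, using only that in the Smyth order an upper bound of the family is a set contained in every $K_i$, and that a least upper bound is the largest such set.

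For the direction ``$\Leftarrow$'', suppose $K=\bigcap_{i\in I} K_i\in \mk(X)$. Since $K\subseteq K_i$ for each $i$, we have $K_i\sqsubseteq K$, so $K$ is an upper bound of the family. If $G\in\mk(X)$ is any upper bound, then $G\subseteq K_i$ for all $i$, hence $G\subseteq K$, that is, $K\sqsubseteq G$. Therefore $K$ is the least upper bound, and $\bigvee_{i\in I} K_i=K$.

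For the direction ``$\Rightarrow$'', suppose $G=\bigvee_{i\in I}K_i$ exists in $\mk(X)$. Then $G\subseteq \bigcap_{i\in I}K_i$, and it remains to prove the reverse inclusion. Take $x\in\bigcap_{i\in I}K_i$. Then $\ua x\subseteq K_i$ for every $i$, because each $K_i$ is an upper set in the specialization order. The set $\ua x$ is compact, since any open cover of it has a member containing $x$, and that open set is an upper set, so it contains all of $\ua x$. It is also saturated and nonempty, so $\ua x\in\mk(X)$. Thus $\ua x$ is an upper bound of $\{K_i : i\in I\}$ in $\mk(X)$. Minimality of $G$ then gives $G\sqsubseteq \ua x$, i.e.\ $\ua x\subseteq G$, and in particular $x\in G$.

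Hence $\bigcap_{i\in I}K_i=G\in\mk(X)$, and the two directions together give the equality $\bigvee_{i\in I}K_i=\bigcap_{i\in I}K_i$. The only step that needs any care is recognising that the principal filters $\ua x$ lie in $\mk(X)$ and serve as test upper bounds; the rest is routine.
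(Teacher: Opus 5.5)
Your proposal is correct and follows essentially the same route as the paper: you show $\bigcap_{i\in I}K_i$ is the least upper bound when it lies in $\mk(X)$, and for the converse you use the principal filters $\ua x$ as test upper bounds to get $\bigcap_{i\in I}K_i\subseteq\bigvee_{i\in I}K_i$. Your explicit check that $\ua x\in\mk(X)$ fills in a step the paper leaves implicit.
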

\begin{proof} Assume that $\bigvee_{i\in I} K_i$ exists in $\mk (X)$. Let $K=\bigvee_{i\in I} K_i$. Then $K\subseteq K_i$ for all $i\in I$, and hence $K\subseteq \bigcap_{i\in I} K_i$. For any $x\in \bigcap_{i\in I} K_i$, $\ua x$ is a upper bound of $\{K_i : i\in I\}\subseteq \mk (X)$, whence $K\sqsubseteq \ua x$ or, equivalently, $\ua x \subseteq K$. Therefore, $\bigcap_{i\in I} K_i\subseteq K$. Thus $\bigcap_{i\in I} K_i=K\in \mk (X)$.

Conversely, if $\bigcap_{i\in I} K_i\in \mk (X)$, then $\bigcap_{i\in I} K_i$ is an upper bound of $\{K_i : i\in I\}$ in $\mk (X)$. Let $G\in \mk (X)$ be another upper bound of $\{K_i : i\in I\}$, then $G\subseteq K_i$ for all $i\in I$, and consequently, $G\subseteq \bigcap_{i\in I} K_i$, that is, $\bigcap_{i\in I} K_i\sqsubseteq G$. So $\bigvee_{i\in I} K_i=\bigcap_{i\in I} K_i$.
\end{proof}

The following result is well-known and can be easily verified (cf. \cite[Fact 2.6 and Section 3.2]{Heckmann-Keimel-2013}).

\begin{lemma}\label{lem-compact-saturated-in-Alexanderoff-topologyin} Let $P$ be a poset. Then $\ir ((P, \alpha (P))=\mathcal D(P)$ and $Q((P, \alpha (P)))=\mathbf{Fin}~\!P$.
\end{lemma}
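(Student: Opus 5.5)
The statement has two parts, and I will handle each by unwinding the definitions of the Alexandroff topology $\alpha(P)$, whose open sets are exactly the upper sets and whose closed sets are exactly the lower sets. Note first that the specialization order of $(P,\alpha(P))$ is the original order: $x\in\overline{\{y\}}={\downarrow}y$ iff $x\leq y$. Every subset is an intersection of upper sets containing it exactly when it is an upper set, so the saturated sets are precisely the upper sets.

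\emph{Irreducible sets.} If $D$ is directed, it is irreducible, as already remarked in the paper for directed sets in the specialization order. Conversely, let $A$ be irreducible; it is nonempty by definition. Given $a,b\in A$, suppose $a$ and $b$ had no common upper bound in $A$. Set $F_1=P\setminus{\uparrow}a$ and $F_2=P\setminus{\uparrow}b$; both are lower sets, hence closed in $\alpha(P)$. Any $c\in A$ lies outside ${\uparrow}a\cap{\uparrow}b$, so $A\subseteq F_1\cup F_2$. Irreducibility then gives $A\subseteq F_1$ or $A\subseteq F_2$. This is impossible, since $a\in A\setminus F_1$ and $b\in A\setminus F_2$. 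Hence $A$ is directed, and so $\ir((P,\alpha(P)))=\mathcal D(P)$.

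\emph{Compact saturated sets.} If $F$ is finite and nonempty, then ${\uparrow}F$ is an upper set, hence saturated. It is compact because any open (upper) cover of ${\uparrow}F$ needs only one member containing each point of $F$, and that member then contains the whole upper set above that point. So $\mathbf{Fin}\,P\subseteq Q((P,\alpha(P)))$.

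Conversely, let $K$ be a nonempty compact upper set. The family $\{{\uparrow}x : x\in K\}$ is an open cover of $K$, so some finite subfamily ${\uparrow}x_1,\dots,{\uparrow}x_n$ covers $K$. Putting $F=\{x_1,\dots,x_n\}\subseteq K$, we get $K\subseteq{\uparrow}F\subseteq{\uparrow}K=K$. Hence $K={\uparrow}F\in\mathbf{Fin}\,P$.

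The only step needing any idea is the choice of the closed sets $P\setminus{\uparrow}a$ and $P\setminus{\uparrow}b$ in the irreducibility argument. Everything else is immediate from the fact that the principal upper sets ${\uparrow}x$ are open in $\alpha(P)$.
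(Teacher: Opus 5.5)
Your proof is correct. The closed lower sets $P\setminus{\uparrow}a$ and $P\setminus{\uparrow}b$ handle the direction from irreducible to directed, and covering $K$ by the open sets ${\uparrow}x$, $x\in K$, yields $K={\uparrow}F$ for a nonempty finite $F$. The paper gives no proof, only calling the lemma well known and easily verified with a citation to Heckmann--Keimel, and your direct verification is exactly that standard argument.
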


For a $T_0$-space $X$ and $U\in \mathcal O(X)$, define $\Box U=\{K\in Q(X) : K\subseteq  U\}$. The \emph{upper Vietoris topology} on $Q(X)$ is the topology that has $\{\Box U : U\in \mathcal O(X)\}$ as a base and the resulting space is denoted by $P_S(X)$, called the \emph{Smyth power space} or \emph{upper space} of $X$ (cf. \cite{Heckmann-1992, Schalk-1993}). It is easy to see that the specialization order on $P_S(X)$ is the Smyth order, that is, for $K_1,K_2\in \mathord{Q}(X)$, $K_1\leq_{P_S(X)}K_2$ if{}f $K_2\subseteq K_1$. The \emph{canonical mapping} $\xi_X: X\longrightarrow P_S(X)$, $x\mapsto\ua x$, is a topological embedding (cf. \cite{Heckmann-1992, Heckmann-Keimel-2013, Schalk-1993}).

The Smyth power space is a very important structure in domain theory, which plays a fundamental role in modeling the semantics of non-deterministic programming languages (see \cite{Abramsky-Jung-1994, GHKLMS-2003, Schalk-1993}).

The following result can be verified straightforwardly (see e.g. \cite[p. 128]{Schalk-1993}
or \cite[proof of Lemma 3.1]{Jia-Jung-2016}).

\begin{lemma}\label{lem-union-operator-continuous} Let $X$ be a $T_0$-space $X$ and $\mathcal K\in\mk(P_S(X))$. Then $\bigcup \mathcal K\in\mk(X)$.
\end{lemma}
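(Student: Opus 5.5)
We must show that $\bigcup\mathcal K$ is nonempty, saturated, and compact in $X$.

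Nonemptiness is immediate: $\mathcal K$ is nonempty and every member of $\mathcal K$ is a nonempty subset of $X$.

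For saturation, recall that each $K\in\mathcal K$ is an upper set in $(X,\leq_X)$, since it is saturated. A union of upper sets is again an upper set, so $\bigcup\mathcal K$ is an upper set with respect to the specialization order. Equivalently, it is saturated in $X$.

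Compactness is the only real step, and we handle it through the basic open sets $\Box U$ of $P_S(X)$. Let $\{U_i : i\in I\}\subseteq \mathcal O(X)$ be an open cover of $\bigcup\mathcal K$. Since a finite union of the $U_i$ is again open, we may assume the family is closed under finite unions, i.e.\ directed with respect to inclusion.

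Fix $K\in\mathcal K$. Then $K$ is compact and $K\subseteq\bigcup_{i\in I}U_i$. By directedness of the cover, there is a single index $i\in I$ with $K\subseteq U_i$, that is, $K\in\Box U_i$. Hence $\{\Box U_i : i\in I\}$ is an open cover of $\mathcal K$ in $P_S(X)$.

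Since $\mathcal K$ is compact in $P_S(X)$, finitely many of these sets suffice, say $\mathcal K\subseteq \Box U_{i_1}\cup\cdots\cup\Box U_{i_n}$. Every $K\in\mathcal K$ then lies in some $U_{i_k}$, so $\bigcup\mathcal K\subseteq U_{i_1}\cup\cdots\cup U_{i_n}$. Thus the original cover has a finite subcover, and $\bigcup\mathcal K$ is compact.

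The only point requiring care is the reduction to a directed cover. It is needed so that each single $K$ fits inside one member of the cover, which is what lets us pass to the sets $\Box U_i$ in $P_S(X)$. The reduction is legitimate: if the directed family of finite unions admits a finite subcover, then expanding each finite union back into its constituent $U_i$ gives a finite subcover of the original family. Together with the first two parts, this shows $\bigcup\mathcal K\in\mk(X)$.
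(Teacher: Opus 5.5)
Your proposal is correct and matches the standard argument behind the citation the paper gives in place of a proof (Schalk; Jia--Jung): pass to a directed open cover, so that each $K\in\mathcal K$ fits inside a single $U_i$, and then use compactness of $\mathcal K$ against the open cover $\{\Box U_i : i\in I\}$ of $P_S(X)$. The nonemptiness and saturation steps are also handled correctly.
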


For the sobriety of the Smyth power spaces, we have the following well-known result.

\begin{theorem}\label{Schalk-Heckman-Keimel theorem} (Heckmann-Keimel-Schalk Theorem) (\cite[Theorem 3.13]{Heckmann-Keimel-2013} and \cite[Lemma 7.20]{Schalk-1993}) For a $T_0$-space $X$, the following conditions are equivalent:
\begin{enumerate}[\rm (1)]
\item $X$ is sober.
 \item  For any $\mathcal A\in \ir(P_S(X))$ and $U\in \mathcal O(X)$, $\bigcap\mathcal A\subseteq U$ implies $K \subseteq U$ for some $K\in \mathcal A$.
 \item $P_S(X)$ is sober.
\end{enumerate}
\end{theorem}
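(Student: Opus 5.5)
We prove the cycle (1)$\Rightarrow$(2)$\Rightarrow$(3)$\Rightarrow$(1).

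\textbf{(1)$\Rightarrow$(2).} Let $\mathcal A\in\ir(P_S(X))$ and $U\in\mathcal O(X)$, and suppose $K\not\subseteq U$ for every $K\in\mathcal A$. Put $A=X\setminus U$, which is closed, and $\mathcal D=\{\overline{\{x\}} : x\in X\}$. We aim to produce a point of $\bigcap\mathcal A$ outside $U$.

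First we find an irreducible closed set meeting every member of $\mathcal A$. Consider
\[
\mathcal C=\{C\in\Gamma(X) : C\subseteq A,\ C\cap K\neq\emptyset \text{ for all } K\in\mathcal A\}.
\]
It is nonempty, since $A\in\mathcal C$. Order $\mathcal C$ by reverse inclusion and apply Zorn's lemma. For a chain $\{C_j\}$ in $\mathcal C$, the intersection $\bigcap_j C_j$ still meets each $K\in\mathcal A$. Indeed, $\{C_j\cap K\}$ is a filtered family of nonempty closed subsets of the compact set $K$, so its intersection is nonempty. Hence $\mathcal C$ has a minimal element $C$.

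Next we show $C$ is irreducible. Suppose $C=C_1\cup C_2$ with $C_i$ closed and $C_i\notin\mathcal C$, so there are $K_i\in\mathcal A$ with $K_i\cap C_i=\emptyset$. Then $K_i\in\Box(X\setminus C_i)$ for $i=1,2$. By irreducibility of $\mathcal A$, the two open sets $\Box(X\setminus C_1)$ and $\Box(X\setminus C_2)$ both meet $\mathcal A$, so some $K\in\mathcal A$ lies in $\Box(X\setminus C_1)\cap\Box(X\setminus C_2)$. This gives $K\cap C=\emptyset$, a contradiction. Hence $C=C_1$ or $C=C_2$ lies in $\mathcal C$ whenever $C$ is covered by two closed sets, so $C$ is irreducible.

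By sobriety, $C=\overline{\{c\}}$ for some $c$. Each $K\in\mathcal A$ meets $\downarrow c$, and $K$ is an upper set, so $c\in K$. Thus $c\in\bigcap\mathcal A$, while $c\in C\subseteq A$ gives $c\notin U$. This contradicts $\bigcap\mathcal A\subseteq U$. The Zorn step and the irreducibility check are the crux of the whole theorem; this is the topological Rudin lemma.

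\textbf{(2)$\Rightarrow$(3).} Let $\mathcal A$ be an irreducible closed subset of $P_S(X)$ and set $K=\bigcap\mathcal A$.

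We first show $K$ is nonempty and compact. Applying (2) with $U=\emptyset$, and using that members of $\mathcal A$ are nonempty, gives $K\neq\emptyset$. For compactness, take an open cover of $K$ by a directed family $\{U_i\}$. Then $K\subseteq\bigcup_i U_i$, so (2) yields some $K'\in\mathcal A$ with $K'\subseteq\bigcup_i U_i$. By compactness of $K'$, $K'\subseteq U_i$ for some $i$, and $K\subseteq K'$. So $K$ is compact. It is clearly saturated, hence $K\in Q(X)$.

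We then claim $\mathcal A=\overline{\{K\}}=\{K'' : K''\supseteq K\}$.
\begin{itemize}
\item $\mathcal A\subseteq\overline{\{K\}}$: each member of $\mathcal A$ contains $\bigcap\mathcal A=K$.
\item $\overline{\{K\}}\subseteq\mathcal A$: it suffices to show $K\in\mathcal A$, as $\mathcal A$ is closed and hence a lower set in the specialization order. If $K\notin\mathcal A$, some basic open $\Box U$ contains $K$ and misses $\mathcal A$. But $K\subseteq U$, so (2) gives a member of $\mathcal A$ inside $U$, i.e.\ in $\Box U$, a contradiction.
\end{itemize}
Uniqueness of the generic point is automatic because $P_S(X)$ is $T_0$. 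Hence $P_S(X)$ is sober.

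\textbf{(3)$\Rightarrow$(1).} The canonical map $\xi_X$ is an embedding, and we show its image $\{\uparrow x : x\in X\}$ is closed under sups of irreducible sets in the sense needed for sobriety.

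Let $A\in\ir_c(X)$. Then $\xi_X(A)$ is irreducible, so $\overline{\xi_X(A)}=\overline{\{K\}}$ for some $K\in Q(X)$. Since $K\in\overline{\xi_X(A)}$, every $\Box U$ with $K\subseteq U$ contains some $\uparrow a$ with $a\in A$. Conversely each $\uparrow a$ lies in $\overline{\{K\}}$, so $K\subseteq\uparrow a$ for all $a\in A$.

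Pick $k\in K$. Then $k\le a$ for all $a\in A$. If $k\notin A$, then $X\setminus A$ is an open set containing $k$. We want an open set containing $K$ that avoids $A$, contradicting the first property of $K$; the open set $X\setminus A$ would serve if it contained all of $K$. To get this, argue instead that $K\cap A\neq\emptyset$. Suppose $K\cap A=\emptyset$. Then $K\subseteq X\setminus A$, so some $\uparrow a$ with $a\in A$ lies in $X\setminus A$, which is absurd. Hence there is $k\in K\cap A$. Since $k\le a$ for all $a\in A$ and $A$ is a lower set, $A=\downarrow k=\overline{\{k\}}$. Uniqueness of $k$ follows from $T_0$, so $X$ is sober.
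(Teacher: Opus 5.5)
The paper gives no proof of this theorem; it only cites Heckmann--Keimel and Schalk. Your argument is essentially the standard one from those sources:
\begin{itemize}
\item (1)$\Rightarrow$(2) goes through the topological Rudin lemma for \emph{irreducible} families. You re-derive it correctly by Zorn's lemma, which is needed because the paper's Lemma~\ref{t Rudin} only covers the filtered case.
\item (2)$\Rightarrow$(3) shows that $\bigcap\mathcal A$ is the generic point of $\mathcal A$.
\item (3)$\Rightarrow$(1) goes through the embedding $\xi_X$.
\end{itemize}
Your steps (1)$\Rightarrow$(2) and (2)$\Rightarrow$(3) are correct as written.

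In (3)$\Rightarrow$(1) you reverse an inequality, and the final step fails as written. From ${\uparrow}a\in\overline{\{K\}}$ you correctly get $K\subseteq{\uparrow}a$. But this means $a\le k$ for all $k\in K$ and $a\in A$, not $k\le a$.

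The last step actually needs $a\le k$. Taking $k\in K\cap A$, it gives $A\subseteq{\downarrow}k$. Since $k\in A={\downarrow}A$, you also have ${\downarrow}k\subseteq A$, so $A={\downarrow}k=\overline{\{k\}}$. With the inequality as you wrote it, you would only obtain $A\subseteq{\uparrow}k$, and $A={\downarrow}k$ does not follow.

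Also remove the false start ``Pick $k\in K$ \dots\ If $k\notin A$ \dots''. The clean order is to first show $K\cap A\neq\emptyset$ using the open set $\Box(X\setminus A)$, and only then pick $k\in K\cap A$. The unused family $\mathcal D$ in (1)$\Rightarrow$(2) can be dropped.
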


\section{Topological Rudin lemma and well-filtered spaces}

\begin{definition}\label{def-WF-space} A $T_0$-space $X$ is said to be \emph{well-filtered} if for any open set $U$ and filtered family $\mathcal{K}\subseteq \mathord{Q}(X)$, $\bigcap\mathcal{K}{\subseteq} U$ implies $K{\subseteq} U$ for some $K{\in}\mathcal{K}$.
\end{definition}

The well-filtered space was originally introduced and investigated by Heckmann \cite{Heckmann-1992}, which was called the $\mathcal U_K$-\emph{admitting space} in \cite{Heckmann-1992}.

From the definition of well-filtered spaces, one can directly obtain the following (see \cite[Proposition 3]{Xi-Zhao-2017}).

\begin{proposition}\label{prop-WF-filtered-Smyth-power-d-space} A $T_0$ space $X$ is well-filtered iff its Smyth power space $P_S(X)$ is a $d$-space.
\end{proposition}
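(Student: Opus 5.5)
We have to show that $X$ is well-filtered if and only if $P_S(X)$ is a $d$-space. The plan is to prove both directions straight from the definitions, using Lemma \ref{lem-sups-in-Smyth} to identify directed suprema in $\mk(X)$ with intersections. Throughout we use two facts: the specialization order of $P_S(X)$ is the Smyth order, and a directed family in $(\mk(X),\sqsubseteq)$ is exactly a filtered family of sets under inclusion.

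\emph{($\Rightarrow$)} Assume $X$ is well-filtered, and let $\{K_d : d\in D\}\subseteq \mk(X)$ be directed in the Smyth order.

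First we show that $K=\bigcap_{d\in D}K_d$ belongs to $\mk(X)$.
\begin{itemize}
\item $K$ is saturated, since it is an intersection of upper sets.
\item $K$ is nonempty: if $K=\emptyset$, then $K\subseteq\emptyset$ with $\emptyset$ open, so well-filteredness gives some $K_d=\emptyset$, which is impossible.
\item $K$ is compact. Take an open cover $\{U_j\}$ of $K$. Then $K\subseteq\bigcup_j U_j$, which is open, so well-filteredness yields $d$ with $K_d\subseteq\bigcup_j U_j$. Compactness of $K_d$ gives a finite subcover of $K_d$, and this finite family also covers $K$.
\end{itemize}
By Lemma \ref{lem-sups-in-Smyth}, $\bigvee_{d} K_d=K$ exists, so $\mk(X)$ is a dcpo.

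Next we show that every basic open set $\Box U$ is Scott open. It is an upper set in the Smyth order. If $\bigvee_d K_d=\bigcap_d K_d\in\Box U$, then $\bigcap_d K_d\subseteq U$, and well-filteredness gives $K_d\subseteq U$ for some $d$, that is, $K_d\in\Box U$. Since Scott open sets form a topology, every open set of $P_S(X)$ is Scott open. Hence $P_S(X)$ is a $d$-space.

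\emph{($\Leftarrow$)} Assume $P_S(X)$ is a $d$-space. Let $\mathcal K\subseteq\mk(X)$ be filtered and $U$ open with $\bigcap\mathcal K\subseteq U$. Then $\mathcal K$ is directed in $P_S(X)$, so its supremum $G$ exists in $\mk(X)$. By Lemma \ref{lem-sups-in-Smyth}, $G=\bigcap\mathcal K$, hence $G\in\Box U$. Since $\Box U$ is open, it is Scott open, so some $K\in\mathcal K$ lies in $\Box U$, i.e.\ $K\subseteq U$. Thus $X$ is well-filtered.

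Alternatively, one can apply Proposition \ref{prop-d-space-charac}(3) to $P_S(X)$ with the open set $\Box U$. Then $\bigcap_{K\in\mathcal K}\ua_{P_S(X)}K\subseteq\Box U$ follows from $G=\bigcap\mathcal K\in\Box U$, since every element of $\ua_{P_S(X)}K$ for all $K$ is a compact saturated set contained in each $K$, hence contained in $G$ and therefore in $U$.

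The only point that needs care is the compactness and nonemptiness of the intersection in the forward direction. This is handled by applying the well-filteredness condition to the open set $\bigcup_j U_j$ (respectively $\emptyset$), as above; after that, everything is routine.
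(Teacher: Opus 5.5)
Your proof is correct and is essentially the argument the paper has in mind. The paper gives no written proof: it only remarks that the statement follows directly from the definition (citing Xi--Zhao), and the intended reasoning is exactly yours. Well-filteredness applied to $\emptyset$ and to $\bigcup_j U_j$ shows that the intersection of a filtered family in $\mk(X)$ lies in $\mk(X)$; Lemma \ref{lem-sups-in-Smyth} then identifies directed suprema with intersections, and Scott openness of each $\Box U$ becomes a restatement of well-filteredness, in both directions.
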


By Lemma \ref{lem-sups-in-Smyth} and Proposition \ref{prop-WF-filtered-Smyth-power-d-space}, we get the following corollary.

\begin{corollary}\label{cor-WF-filtered-compact-sets-cap-compact} For a well-filtered space $X$ and a filtered family $\mathcal K\subseteq \mk (X)$, $\bigcap \mathcal K\in \mk (X)$ and $\bigvee^{\uparrow}_{Q(X)}\mathcal K=\bigcap \mathcal K$.
\end{corollary}
%\begin{proof} Clearly, $\bigcap \mathcal K$ is saturated and $\bigcap \mathcal K\neq\emptyset$ (otherwise, $\bigcap \mathcal K=\emptyset$ implies $K=\emptyset$ for some $K\in \mathcal K$, a contradiction). Now we verify that $\bigcap \mathcal K$ is compact. Let $\{U_i : i\in I\}$ be an open cover of $\bigcap \mathcal K$. As $X$ is well-filtered, there is a $K\in\mathcal K$ such that $K\subseteq \bigcup_{i\in I}U_i$. By the compactness of $K$, there is $J\in I^{(<\omega)}$ such that $K\subseteq \bigcup_{i\in J}U_i$, and hence $\bigcap \mathcal K\subseteq K\subseteq \bigcup_{i\in J}U_i$. Thus $\bigcap \mathcal K\in \mk (X)$.
%\end{proof}

For the well-filteredness of Scott space, Xi and Lawson \cite{Xi-Lawson-2017} gave the following useful result.

\begin{proposition}\label{prop-Scott-topology-on-complete-lattice-WF} (\cite[Corollary 3.2]{Xi-Lawson-2017}) For a complete lattice $L$, its Scott space $(L, \sigma (L))$ is well-filtered.
\end{proposition}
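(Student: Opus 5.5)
We have to prove Proposition \ref{prop-Scott-topology-on-complete-lattice-WF}: for a complete lattice $L$, the Scott space $\Sigma L$ is well-filtered. My plan is to verify Definition \ref{def-WF-space} directly, by contradiction. Let $\mathcal K=\{K_d : d\in D\}\subseteq Q(\Sigma L)$ be filtered, let $U\in\sigma(L)$ with $\bigcap_{d\in D}K_d\subseteq U$, and suppose $K_d\not\subseteq U$ for every $d$. The aim is to produce a point of $\bigcap\mathcal K$ outside $U$. The natural candidate comes from the infima $\bigwedge K_d$, which exist because $L$ is complete.

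The first step is to exploit the filteredness, using a Rudin-type choice. Put $A_d=K_d\setminus U$. Each $A_d$ is nonempty, and the family $\{A_d\}$ is again filtered under reverse inclusion. I then want a directed set $E\subseteq\bigcup_d A_d$ that meets every $A_d$. In the Scott setting a direct construction is also available. Set $a_d=\bigwedge A_d$. If $K_e\subseteq K_d$, then $A_e\subseteq A_d$ and hence $a_d\le a_e$. So $\{a_d : d\in D\}$ is directed, and we may put $a=\bigvee^{\uparrow}_{d}a_d$, which exists since $L$ is complete.

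The second step is to show that $a\in\bigcap_d K_d$. Fix $d$ and suppose $a\notin K_d$. The set $L\setminus{\downarrow}a$ is Scott open, since ${\downarrow}a$ is Scott closed. The difficulty is that $K_d$ is only compact and saturated, and $a$ need not lie above a single element of $K_d$. Here is where I would use compactness. For each $e\le d$ in the filter order (that is, $K_e\subseteq K_d$), the set $A_e$ lies inside $K_d$. I would replace the infima $a_d$ by a Rudin-lemma selection: apply the topological Rudin lemma (Heckmann–Keimel) to the irreducible family $\{{\downarrow}A_d\}$, or argue via the closed sets $\overline{A_d}$. This yields an irreducible, indeed directed, set $E$ with $E\subseteq\bigcup_d A_d$ and $E\cap K_d\ne\emptyset$ for all $d$, and then we take $a=\bigvee E$. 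Since each ${\uparrow}(E\cap K_d)$ is cofinal in $E$, the closure argument gives $\bigvee E\in\overline{E\cap K_d}$. The missing piece is that $\bigvee E\in K_d$, which requires $K_d$ to be closed under such sups. That is not automatic, so instead I would argue as follows. If $\bigvee E\notin K_d$ for some $d$, then, because $K_d$ is saturated, we have $\bigvee E\not\ge k$ for all $k\in K_d$. The open sets $L\setminus{\downarrow}x$ for $x\ge\bigvee E$ do not directly cover $K_d$, so I would rather use the complements of the Scott closed sets ${\downarrow}\bigvee E$ to get $K_d\subseteq L\setminus{\downarrow}\bigvee E$. That contradicts $E\cap K_e\subseteq{\downarrow}\bigvee E$ for $K_e\subseteq K_d$. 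So in fact $\bigvee E\in K_d$ for all $d$, that is, $\bigvee E\in\bigcap\mathcal K\subseteq U$.

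The final step uses Scott openness. Since $E$ is directed and $\bigvee E\in U$, some $e\in E$ lies in $U$. But $E\subseteq\bigcup_d (K_d\setminus U)$, which is a contradiction.

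The main obstacle is the construction of the directed set $E$ together with the verification that $\bigvee E\in K_d$. Saturation alone gives only ${\uparrow}$-closure, and the contradiction above has to be phrased carefully, namely through the fact that $(E\cap K_e)\cap{\downarrow}\bigvee E\ne\emptyset$ holds trivially. To handle this I would first try the Heckmann–Keimel topological Rudin lemma applied to $\{K_d\cap(L\setminus U)\}$ in the Alexandroff or upper topology, where the compact sets are well understood.
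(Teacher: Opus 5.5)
\textbf{There is a genuine gap: the directed set $E$ is assumed, not constructed.} The paper gives no proof of this proposition; it quotes Xi and Lawson. So your argument has to stand on its own, and everything in it hinges on a directed set $E\subseteq L\setminus U$ that meets every $K_d$.

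The topological Rudin lemma (Lemma~\ref{t Rudin}) gives only a minimal closed set $A\subseteq L\setminus U$ meeting all $K_d$. Such an $A$ is irreducible, but nothing makes it the closure of a directed set. In a dcpo the Scott closure of a directed $E$ is ${\downarrow}\bigvee E$. So saying that Rudin sets come from directed sets is saying that they are point closures, which by Proposition~\ref{wfrudinc}(2) is exactly well-filteredness. The argument is therefore circular at its only nontrivial step.

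Once $E$ is available, everything else is immediate. The ``missing piece'' $\bigvee E\in K_d$ that you worry about follows at once from $E\cap K_d\neq\emptyset$ and $K_d={\uparrow}K_d$.

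Your step producing $E$ also uses nothing about $L$ being a complete lattice, and for general dcpos such an $E$ need not exist. Take Johnstone's dcpo $\mathbb J=\mathbb N\times(\mathbb N\cup\{\omega\})$. The sets $\{(m,\omega): m\geq n\}$ form a filtered family of Scott compact saturated sets with empty intersection. No directed set can meet them all, because distinct maximal points have no common upper bound.

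Your closing suggestion does not repair this. Scott compact sets need not be compact in the finer Alexandroff topology. And $L\setminus U$ is in general not closed in the upper topology, so the Rudin lemma cannot be applied there to the set you need.

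\textbf{What is missing is an argument that uses binary joins on the Rudin set $A$.} One route that works has two preliminary facts.
\begin{enumerate}
\item[(i)] By minimality, every Scott open $V$ meeting $A$ satisfies $K_d\cap A\subseteq V$ for some $d$. Otherwise $A\setminus V$ would be a smaller closed set meeting all $K_d$.
\item[(ii)] For each $d$ put $O_d=\{z\in L : z\vee w\notin A \text{ for all } w\in K_d\cap A\}$. Each $O_d$ is Scott open, because $z\mapsto z\vee y$ is Scott continuous on a complete lattice and $K_d\cap A$ is compact. Concretely, a directed $Z$ with $\bigvee Z\in O_d$ yields the directed open cover $\{w : z\vee w\notin A\}$, $z\in Z$, of $K_d\cap A$, and compactness puts some $z\in Z$ into $O_d$. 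Also $O_d\subseteq O_{d'}$ whenever $K_{d'}\subseteq K_d$.
\end{enumerate}
Now suppose $x,y\in A$ but $x\vee y\notin A$.
\begin{enumerate}
\item Apply (i) to the open set $\{z : z\vee y\notin A\}$, which contains $x$. This gives $d_1$ with $K_{d_1}\cap A$ inside that set.
\item For each $z\in K_{d_1}\cap A$, apply (i) to $\{w : z\vee w\notin A\}$, which contains $y$. This gives $z\in O_d$ for some $d$.
\item The $O_d$ form a directed open cover of the compact set $K_{d_1}\cap A$, so $K_{d_1}\cap A\subseteq O_{d_2}$ for some $d_2$.
\item Choose $d_3$ with $K_{d_3}\subseteq K_{d_1}\cap K_{d_2}$ and any $w\in K_{d_3}\cap A$. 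Then $w\in O_{d_2}\subseteq O_{d_3}$, so $w=w\vee w\notin A$, a contradiction.
\end{enumerate}
Hence $A$ is closed under binary joins, so it is directed. Then $a=\bigvee A\in A$ and $A={\downarrow}a$. Since $A$ meets every $K_d$, we get $a\in\bigcap_d K_d\setminus U$, contradicting $\bigcap_d K_d\subseteq U$.
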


Rudin's Lemma, due to Mary Rudin \cite{Rudin-1981}, is an important tool in domain theory and non-Hausdorff topology (see \cite{Abramsky-Jung-1994, GHKLMS-2003, Gierz-Lawson-Stralka-1983, Goubault-2013}). In \cite{Heckmann-Keimel-2013}, Heckmann and Keimel presented a topological variant of Rudin's Lemma.

In this paper we only need the following filtered version of topological Rudin lemma.

\begin{lemma}\label{t Rudin} Let $X$ be a $T_0$-space and $\mathcal K$ a filtered family of nonempty compact saturated sets of $X$. Then every closed set $C$ of $X$  that
meets all members of $\mathcal{K}$ contains an minimal irreducible closed subset $A$ that meets all
members of $\mathcal{K}$.
\end{lemma}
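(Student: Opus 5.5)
The approach is a Zorn's Lemma argument on the family
$$\mathcal M=\{B\in\Gamma(X) : B\subseteq C \mbox{ and } B\cap K\neq\emptyset \mbox{ for all } K\in\mathcal K\},$$
ordered by reverse inclusion. Since $C\in\mathcal M$, the family is nonempty. I would then show two things: every chain in $\mathcal M$ has a lower bound in $\mathcal M$, and any minimal element of $\mathcal M$ is irreducible. A minimal element $A$ is then a closed subset of $C$ meeting every member of $\mathcal K$. If it is irreducible, it is automatically minimal among irreducible closed sets with this property, since any smaller such set would also lie in $\mathcal M$.

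For the chain condition, let $\{B_j : j\in J\}$ be a chain in $\mathcal M$ and put $B=\bigcap_j B_j$. This set is closed and contained in $C$. Suppose, for contradiction, that $B\cap K=\emptyset$ for some $K\in\mathcal K$. Then $K\subseteq\bigcup_j (X\setminus B_j)$, which is an open cover of $K$ that is directed, because the $B_j$ form a chain. By compactness, $K\subseteq X\setminus B_j$ for a single $j$. This contradicts $B_j\cap K\neq\emptyset$. Hence $B\in\mathcal M$, and Zorn's Lemma gives a minimal element $A\in\mathcal M$.

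For irreducibility, first note $A\neq\emptyset$, since $\mathcal K$ is nonempty and its members are nonempty. Suppose $A\subseteq F_1\cup F_2$ with $F_1,F_2$ closed, and suppose neither $A\cap F_1$ nor $A\cap F_2$ equals $A$. By minimality, neither lies in $\mathcal M$, so there are $K_1,K_2\in\mathcal K$ with $A\cap F_1\cap K_1=\emptyset$ and $A\cap F_2\cap K_2=\emptyset$. Here the filteredness of $\mathcal K$ is used: choose $K_3\in\mathcal K$ with $K_3\subseteq K_1\cap K_2$. Then
$$A\cap K_3\subseteq (A\cap F_1\cap K_1)\cup(A\cap F_2\cap K_2)=\emptyset,$$
contradicting $A\in\mathcal M$. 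So $A\subseteq F_1$ or $A\subseteq F_2$, and $A$ is irreducible.

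The only delicate point is the chain step, where compactness has to be applied to a directed open cover. Beyond that, the argument is routine, and filteredness is needed only in the irreducibility step.
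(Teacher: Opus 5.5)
Your proof is correct. The paper states this lemma without proof, citing Heckmann--Keimel, and your Zorn's lemma argument (intersect a chain using compactness against a directed open cover, then show a minimal element is irreducible) is the standard proof from that source, with filteredness of $\mathcal K$ doing the work that irreducibility of $\mathcal K$ in $P_S(X)$ does there. Your $A$ is in fact minimal among \emph{all} closed sets meeting every member of $\mathcal K$ (so $A\in m(\mathcal K)$), not merely among irreducible ones. That stronger form is what the paper uses in the proof of Theorem~\ref{theor-property-W-d-space-WF}, where $A\cap{\downarrow}u(g)$ need not be irreducible.
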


For a $T_0$-space $X$ and $\mathcal{K}\subseteq \mathord{Q}(X)$, let $M(\mathcal{K})=\{A\in \Gamma(X) : K\cap A\neq\emptyset \mbox{~for all~} K\in \mathcal{K}\}$ and $m(\mathcal{K})=\{A\in \Gamma(X) : A \mbox{~is a minimal menber of~} M(\mathcal{K})\}$.

\begin{definition}\label{rudinset} (\cite[Definition 2.1]{Shen-Xi-Xu-Zhao-2019} and \cite[Definition 4.6 and Definition 6.1]{Xu-Shen-Xi-Zhao-2020-2})
		Let $X$ be a $T_0$-space and $A$ a nonempty closed set of $X$.
\begin{enumerate}[\rm (1)]
\item $A$ is said to be  a \emph{Rudin set}, if there exists a filtered family $\mathcal K\subseteq \mathord{Q}(X)$ such that $A\in m(\mathcal K)$ (that is, $A$ is a minimal closed set that intersects all members of $\mathcal K$). Let $\mathsf{RD}(X)=\{A\in \Gamma(X) : A\mbox{~is a Rudin set}\}$.
\item $A$ is said to be  a \emph{well-filtered determined set}, $\wdd$ \emph{set} for short, if for any continuous mapping $ f:X\longrightarrow Y$
into a well-filtered space $Y$, there exists a unique $y_A\in Y$ such that $\overline{f(A)}=\overline{\{y_A\}}$.
Denote by $\mathsf{WD}(X)$ the set of all closed well-filtered determined subsets of $X$.
\end{enumerate}
\end{definition}

\begin{lemma}\label{DRWIsetrelation} (\cite[Proposition 6.2]{Xu-Shen-Xi-Zhao-2020-2})
	Let $X$ be a $T_0$-space. Then $S_c(X)\subseteq\mathcal{D}_c(X)\subseteq \mathsf{RD}(X)\subseteq\mathsf{WD}(X)\subseteq\ir_c(X)$.
\end{lemma}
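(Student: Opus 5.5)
We must prove the chain $\mathcal S_c(X)\subseteq\mathcal D_c(X)\subseteq \mathsf{RD}(X)\subseteq\mathsf{WD}(X)\subseteq\ir_c(X)$ by checking each inclusion in turn.

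\emph{First inclusion.} For $x\in X$, the singleton $\{x\}$ is directed, so $\overline{\{x\}}\in\mathcal D_c(X)$.

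\emph{Second inclusion.} Let $D\in\mathcal D(X)$ and consider $\mathcal K=\{\ua d : d\in D\}$. This family lies in $\mk(X)$ and is filtered in the Smyth order, because $D$ is directed.
\begin{itemize}
\item $\overline D\in M(\mathcal K)$, since $d\in \overline D\cap\ua d$ for each $d\in D$.
\item Minimality: let $B\subseteq\overline D$ be closed with $B\cap\ua d\neq\emptyset$ for all $d$. Pick $b_d\in B\cap \ua d$. Then $d\le b_d$, and since $B$ is a lower set, $d\in B$. Hence $D\subseteq B$, so $\overline D\subseteq B$.
\end{itemize}
Thus $\overline D\in m(\mathcal K)$ is a Rudin set.

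\emph{Third inclusion (the main step).} Let $A\in m(\mathcal K)$ with $\mathcal K$ filtered, and let $f:X\to Y$ be continuous with $Y$ well-filtered.
\begin{itemize}
\item The family $\{\ua f(K): K\in\mathcal K\}$ lies in $\mk(Y)$, since continuous images of compact sets are compact. It is filtered.
\item Set $B=\overline{f(A)}$. Then $B$ meets every $\ua f(K)$.
\item Suppose $\ua f(K)\cap B=\emptyset$ fails to produce a point, i.e.\ $\bigcap_K \ua f(K)\cap B=\emptyset$. Then $\bigcap_K\ua f(K)\subseteq Y\setminus B$, and well-filteredness gives some $K$ with $\ua f(K)\subseteq Y\setminus B$. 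This contradicts $B\cap\ua f(K)\ne\emptyset$. So there is $y\in B\cap\bigcap_K\ua f(K)$.
\item We claim $B=\overline{\{y\}}$. One direction, $\overline{\{y\}}\subseteq B$, is clear.
\item For the converse, consider the closed set $A'=A\cap f^{-1}(\overline{\{y\}})$. For each $K$, $y\in\ua f(K)$ gives $k\in K$ with $f(k)\le y$, so $k\in f^{-1}(\overline{\{y\}})$. Since $f^{-1}(\overline{\{y\}})$ is closed, hence a lower set, $\downarrow K$-type arguments apply. But we need $A'$ to meet $K$, which requires $k\in A$; that is not automatic.
\item To fix this, use $y\in B$ together with minimality and a different argument. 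The cleanest route: $A$ is irreducible (see below), so $f(A)$ is irreducible. Apply minimality to $A\cap f^{-1}(\overline{\{y\}})$ after showing it meets each $K$.
\item Showing this meeting property is the main obstacle. I would try to prove it by contradiction. If $K\cap A\cap f^{-1}(\overline{\{y\}})=\emptyset$, then $K\cap A\subseteq f^{-1}(Y\setminus\overline{\{y\}})$. Filteredness and compactness then give, for all $K'\subseteq K$, a relation $K'\cap A\subseteq$ an open set $f^{-1}(V)$ with $y\notin V$. Pass to the filtered family $\{\ua f(K'\cap A)\}$, which consists of compact sets because $K'\cap A$ is a closed subset of a compact set. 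Its intersection contains a point $y'\in\overline{f(A)}$ chosen as above, and so $y'\notin\overline{\{y\}}$... Rather than chase this, it is simplest to rerun the choice of $y$ using the family $\{\ua f(K\cap A)\}$ from the outset. In that case $y$ lies above some $f(k)$ with $k\in K\cap A$ for every $K$, so $A'$ meets every $K$. Minimality then gives $A'=A$, hence $f(A)\subseteq\overline{\{y\}}$ and $B=\overline{\{y\}}$.
\item Uniqueness of $y$ follows from $T_0$.
\end{itemize}

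\emph{Fourth inclusion.} Let $A\in\mathsf{WD}(X)$. Take $f$ to be the inclusion of $X$ into its sobrification, or more simply use $\xi$ into a well-filtered space such as the sobrification $X^s$ (sober implies well-filtered), whose points correspond to $\ir_c(X)$. Then $\overline{f(A)}=\overline{\{y\}}$ is irreducible, and irreducibility pulls back along the topological embedding $f$, because $f(A)$ is dense in an irreducible closed set. Hence $A$ is irreducible.

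Alternatively, to avoid sobrification, I would first prove directly that a Rudin set is irreducible: if $A\subseteq F_1\cup F_2$ with $A\not\subseteq F_i$, then by minimality $A\cap F_i$ misses some $K_i$. Choosing $K\subseteq K_1\cap K_2$ by filteredness forces $K\cap A=\emptyset$, a contradiction. For $\mathsf{WD}$ sets, the sobrification argument is still needed, and this is where I expect to rely on the standard fact that the sobrification is sober, hence well-filtered.
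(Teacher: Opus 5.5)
The paper gives no proof of this lemma; it is quoted from \cite[Proposition 6.2]{Xu-Shen-Xi-Zhao-2020-2}. Your argument is correct and is essentially the standard one from that source: $\{\ua d : d\in D\}$ witnesses $\overline D\in\mathsf{RD}(X)$; the filtered family $\{\ua f(K\cap A) : K\in\mathcal K\}\subseteq Q(Y)$, together with well-filteredness of $Y$ and minimality of $A$, gives $\overline{f(A)}=\overline{\{y\}}$; and the embedding of $X$ into its sober, hence well-filtered, sobrification gives irreducibility of $\mathsf{WD}$ sets.

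In a write-up, drop the false start with $\{\ua f(K)\}$ and work with $\{\ua f(K\cap A)\}$ from the outset. Also cite an independent proof that sober spaces are well-filtered (e.g.\ via Lemma~\ref{t Rudin}), since this paper derives Corollary~\ref{cor-sober-WF-d-space} from the very lemma you are proving.
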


\begin{proposition}\label{wfrudinc} (\cite[Corollary 7.11]{Xu-Shen-Xi-Zhao-2020-2} and \cite[Proposition 2.14]{Xu-2026})
Let $X$ be a $T_0$-space. Then the following conditions are equivalent:
	\begin{enumerate}[\rm (1)]
		\item $X$ is well-filtered.
		 \item $\mathsf{RD}(X)=\mathcal S_c(X)$.
\item $\wdd (X)=\mathcal S_c(X)$.
	\item  For any $A\in \mathsf{RD}(X)$ and $U\in \mathcal O(X)$, $\bigcap_{a\in A}\ua a\subseteq U$ implies $\ua a \subseteq U$ for some $a\in A$.
\item  For any $A\in \wdd (X)$ and $U\in \mathcal O(X)$, $\bigcap_{a\in A}\ua a\subseteq U$ implies $\ua a \subseteq U$ for some $a\in A$.
                \end{enumerate}
\end{proposition}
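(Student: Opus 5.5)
We prove the cycle (1)$\Rightarrow$(2)$\Rightarrow$(4)$\Rightarrow$(1), and separately (1)$\Rightarrow$(3)$\Rightarrow$(5)$\Rightarrow$(4). Since $\mathsf{RD}(X)\subseteq\wdd(X)$ by Lemma \ref{DRWIsetrelation}, (5)$\Rightarrow$(4) is immediate.

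\emph{Implications (1)$\Rightarrow$(2) and (1)$\Rightarrow$(3).} Assume $X$ is well-filtered. Then the identity $X\to X$ is a continuous map into a well-filtered space. By the definition of $\wdd$ sets, every $A\in\wdd(X)$ therefore has the form $\overline{\{x\}}$, so $\wdd(X)=\mathcal S_c(X)$; here the inclusion $\mathcal S_c(X)\subseteq\wdd(X)$ comes from Lemma \ref{DRWIsetrelation}. This gives (3). The chain $\mathcal S_c(X)\subseteq\mathsf{RD}(X)\subseteq\wdd(X)$ then forces $\mathsf{RD}(X)=\mathcal S_c(X)$, which is (2).

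\emph{Implications (2)$\Rightarrow$(4) and (3)$\Rightarrow$(5).} Let $A=\overline{\{x\}}$. Then $x\in A$ and $\bigcap_{a\in A}\ua a=\ua x$, because $a\le x$ for every $a\in A$. Hence $\bigcap_{a\in A}\ua a\subseteq U$ gives $\ua x\subseteq U$, and $x$ is the required element of $A$.

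\emph{Implication (4)$\Rightarrow$(1).} This is the substantive step. Let $\mathcal K\subseteq\mk(X)$ be filtered and $U$ open with $\bigcap\mathcal K\subseteq U$, and suppose, for a contradiction, that $K\not\subseteq U$ for every $K\in\mathcal K$. Then the closed set $C=X\setminus U$ meets every member of $\mathcal K$. By Lemma \ref{t Rudin} there is a minimal closed set $A\subseteq C$ meeting all members of $\mathcal K$. We need $A\in m(\mathcal K)$, i.e.\ minimality in all of $M(\mathcal K)$ and not only among subsets of $C$. This holds because any $B\in M(\mathcal K)$ with $B\subseteq A$ is automatically contained in $C$. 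So $A\in\mathsf{RD}(X)$.

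Next we show $\bigcap_{a\in A}\ua a\subseteq\bigcup\mathcal K$; in fact we aim for $\bigcap_{a\in A}\ua a\subseteq\bigcap\mathcal K$. Fix $K\in\mathcal K$ and $y\in\bigcap_{a\in A}\ua a$. Choose $a\in A\cap K$. Then $a\le y$, and $K$ is saturated, so $y\in K$. Hence $\bigcap_{a\in A}\ua a\subseteq\bigcap\mathcal K\subseteq U$. Applying (4) yields $a\in A$ with $\ua a\subseteq U$, so $a\in U$. But $a\in A\subseteq C=X\setminus U$, a contradiction.

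The only delicate point is the verification that the set supplied by Lemma \ref{t Rudin} is genuinely a Rudin set, which the observation above settles. The remaining steps are routine.
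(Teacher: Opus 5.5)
The paper does not prove this proposition itself (it is quoted from the literature), but your argument is correct and is the standard one. The easy implications come from the inclusions $\mathcal S_c(X)\subseteq\mathsf{RD}(X)\subseteq\wdd(X)$ of Lemma~\ref{DRWIsetrelation} together with $\bigcap_{a\in\overline{\{x\}}}\ua a=\ua x$, and the substantive step (4)$\Rightarrow$(1) uses the filtered topological Rudin lemma (Lemma~\ref{t Rudin}) exactly as the paper does in the proof of Theorem~\ref{theor-property-W-d-space-WF}, where your checks that the resulting $A$ lies in $m(\mathcal K)$ and that $\bigcap_{a\in A}\ua a\subseteq\bigcap\mathcal K$ are both sound.
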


By Proposition \ref{prop-d-space-charac}, Proposition \ref{prop-sober-charac}, Lemma \ref{DRWIsetrelation} and Proposition \ref{wfrudinc}, we have the following:

\begin{corollary}\label{cor-sober-WF-d-space} Every sober space is well-filtered and every well-filtered space is a $d$-space.
\end{corollary}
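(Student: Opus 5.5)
The claim splits into two parts, and I will prove each from the characterizations already collected.

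\emph{Every sober space is well-filtered.} Let $X$ be sober. By Proposition \ref{prop-sober-charac}, $\ir_c(X)=\mathcal S_c(X)$. Lemma \ref{DRWIsetrelation} gives the chain
\[
\mathcal S_c(X)\subseteq \mathsf{RD}(X)\subseteq \ir_c(X).
\]
Combining the two, $\mathsf{RD}(X)=\mathcal S_c(X)$. Proposition \ref{wfrudinc} ((2)$\Rightarrow$(1)) then says $X$ is well-filtered.

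\emph{Every well-filtered space is a $d$-space.} Let $X$ be well-filtered. Proposition \ref{wfrudinc} ((1)$\Rightarrow$(2)) gives $\mathsf{RD}(X)=\mathcal S_c(X)$. Lemma \ref{DRWIsetrelation} gives
\[
\mathcal S_c(X)\subseteq \mathcal D_c(X)\subseteq \mathsf{RD}(X),
\]
so $\mathcal D_c(X)=\mathcal S_c(X)$. Proposition \ref{prop-d-space-charac} ((2)$\Rightarrow$(1)) then says $X$ is a $d$-space.

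As a self-contained alternative for the second part, let $D\in\mathcal D(X)$ and $U\in\mathcal O(X)$ with $\bigcap_{d\in D}\ua d\subseteq U$. The family $\{\ua d : d\in D\}$ is filtered in $Q(X)$, because principal filters are compact saturated and directedness of $D$ makes the family filtered. Well-filteredness then gives $\ua d\subseteq U$ for some $d\in D$, which is condition (3) of Proposition \ref{prop-d-space-charac}.

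None of these steps is hard; each just combines the cited equivalences. The only point to check is the direction of the inclusions in Lemma \ref{DRWIsetrelation}, which must sandwich the relevant classes in each case.
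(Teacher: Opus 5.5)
Your proof is correct and follows exactly the route the paper indicates: the paper obtains the corollary by combining Propositions \ref{prop-d-space-charac}, \ref{prop-sober-charac} and \ref{wfrudinc} with the inclusions of Lemma \ref{DRWIsetrelation}, which is precisely your argument. Your self-contained alternative for the second part is also valid: apply well-filteredness to the filtered family $\{\ua d : d\in D\}\subseteq Q(X)$ and then invoke condition (3) of Proposition \ref{prop-d-space-charac}.
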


A poset $P$ is said to be \emph{Noetherian} if it satisfies the \emph{ascending chain condition} ($\mathrm{ACC}$ for short): every ascending chain has a greatest member.

\begin{proposition}\label{prop-Noethrian-Alexandroff-topology} (\cite[Proposition 5.4 and Theorem 5.7]{Zhao-Ho-2015} and \cite[Proposition 3.8]{Xu-Shen-Xi-Zhao-2020-1}) For a poset $P$, the following conditions are equivalent:
	\begin{enumerate}[\rm (1)]
		\item $P$ is Noetherian.
\item Every directed subset of $P$ has a largest member.
        \item $P$ is a dcpo and ${\uparrow}x\in \sigma(P)$ for all $x\in P$.
        \item $P$ is a dcpo and $\sigma(P)=\alpha(P)$.
        \item $(P,\alpha(P))$ is a $d$-space.
        \item $(P,\alpha(P))$ is well-filtered.
\item $(P,\alpha(P))$ is sober.
	\end{enumerate}
\end{proposition}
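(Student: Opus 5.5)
The plan is to prove the equivalences in the cycle (1)$\Rightarrow$(2)$\Rightarrow$(3)$\Rightarrow$(4)$\Rightarrow$(1), and then to attach (5), (6), (7) through (4) and Lemma \ref{lem-compact-saturated-in-Alexanderoff-topologyin}.

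For (1)$\Rightarrow$(2), let $D$ be directed. If $D$ had no largest member, then starting from $d_0\in D$ we could repeatedly choose $d_{n+1}\in D$ with $d_{n+1}\not\leq d_n$, and by directedness we may also take $d_{n+1}\geq d_n$. This gives a strictly ascending chain with no greatest member, contradicting ACC. This step uses dependent choice.

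(2)$\Rightarrow$(1) is immediate, since every chain is directed.

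(2)$\Rightarrow$(3): every directed set has a largest member, which is its supremum, so $P$ is a dcpo. Let $D$ be directed with $\bigvee D\in{\uparrow}x$. Then $\bigvee D=\max D\in D\cap{\uparrow}x$, so ${\uparrow}x$ is Scott open.

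(3)$\Rightarrow$(4): every upper set $U$ equals $\bigcup_{x\in U}{\uparrow}x$, a union of Scott open sets. Hence $\alpha(P)\subseteq\sigma(P)$. The reverse inclusion always holds.

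(4)$\Rightarrow$(5): $\alpha(P)$ is order-compatible and, by (4), equal to $\sigma(P)$ on a dcpo. So $(P,\alpha(P))$ is a $d$-space.

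(5)$\Rightarrow$(2): let $D$ be directed. Since $(P,\alpha(P))$ is a $d$-space, $\bigvee D$ exists and every Alexandroff open set is Scott open. The set ${\uparrow}\bigvee D$ is Alexandroff open and contains $\bigvee D$, so it meets $D$. Thus some $d\in D$ satisfies $d\geq\bigvee D$, and $d$ is the largest member of $D$. This closes the cycle of (1)--(5).

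For (6) and (7) I would argue as follows.

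(7)$\Rightarrow$(6)$\Rightarrow$(5) follows from Corollary \ref{cor-sober-WF-d-space}.

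(2)$\Rightarrow$(7): by Lemma \ref{lem-compact-saturated-in-Alexanderoff-topologyin}, the irreducible sets of $(P,\alpha(P))$ are exactly the directed sets. Since closed sets are lower sets, every irreducible closed set $A$ is directed. By (2), $A$ has a largest element $a$, so $A={\downarrow}a=\overline{\{a\}}$. Uniqueness of $a$ follows from $T_0$. By Proposition \ref{prop-sober-charac}, the space is sober.

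I expect no step to be a real obstacle. The only points needing care are:
\begin{enumerate}
\item the choice argument in (1)$\Rightarrow$(2);
\item in (5)$\Rightarrow$(2), remembering that a $d$-space requires every open set to be Scott open, which is exactly what makes ${\uparrow}\bigvee D$ meet $D$.
\end{enumerate}
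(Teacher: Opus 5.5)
Your proof is correct. Every link of the cycle (1)$\Rightarrow$(2)$\Rightarrow$(3)$\Rightarrow$(4)$\Rightarrow$(5)$\Rightarrow$(2)$\Rightarrow$(1) holds, and (2)$\Rightarrow$(7)$\Rightarrow$(6)$\Rightarrow$(5) correctly attaches the remaining two conditions.

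The compressed step in (1)$\Rightarrow$(2) is fine: an upper bound in $D$ of $d_n$ and a witness $e\not\leq d_n$ is automatically $\not\leq d_n$, so the sequence is strictly ascending. The argument also works whether ``ascending chain'' is read as an increasing sequence or as an arbitrary chain.

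The paper gives no proof of its own here; it simply imports the result from Zhao--Ho and Xu--Shen--Xi--Zhao, so there is no in-paper argument to compare yours against. Your assembly makes the statement self-contained using only the paper's preliminaries. Sobriety is read off directly from $\ir((P,\alpha(P)))=\mathcal D(P)$ in Lemma \ref{lem-compact-saturated-in-Alexanderoff-topologyin}. Well-filteredness then comes for free by sandwiching it between sobriety and the $d$-space property via Corollary \ref{cor-sober-WF-d-space}. As a result you never have to handle the compact saturated sets $\mathbf{Fin}~\!P$ or filtered families directly.
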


\section{Property Q and well-filteredness}

\begin{definition}\label{def-K-semilattice} A $T_0$-space $X$ is called a \emph{Q}-\emph{semilattice}, if $\wedge G$ exists in $\Omega X$ for any $G\in Q(X)$.
\end{definition}

\begin{remark}\label{rem-property-W} Let $X$ be a $T_0$-space and $A$ a nonempty set of $X$. Then
 \begin{enumerate}[\rm (1)]
 \item $\wedge A$ exists in $X$ iff $\bigwedge {\uparrow} A$ in $X$, and in this case $\wedge A=\bigwedge {\uparrow} A$.

 \item $X$ (with the specialization order) is a $Q$-semilattice iff $\wedge H$ exists in $X$ for any nonempty compact set $H$ of $X$.
 \item If $X$ is a $Q$-semilattice, then $X$ is a semilattice.
     \end{enumerate}
     \end{remark}

     \begin{definition}\label{def-property-Q} A $T_0$-space $X$ is said to have \emph{property Q} if it satisfies the following two conditions:
 \begin{enumerate}[\rm (1)]
 \item  $X$ is a $Q$-semilattice;

 \item for any filtered family $\{K_d : d\in D\}\subseteq Q(X)$ and $x\in X$, if $\bigvee^{\uparrow}_{d\in D}\bigwedge K_d$ exists and $x\not\leq \bigvee^{\uparrow}_{d\in D}\bigwedge K_d$, then there is $\varphi\in \prod\limits_{d\in D}\!\!K_d$ and $u\in (\varphi(D))^{\uparrow}$ such that $x\not\leq u$.
     \end{enumerate}
     \end{definition}

Now we give the main result of this paper.

\begin{theorem}\label{theor-property-W-d-space-WF} Every $d$-space with property Q is well-filtered.
\end{theorem}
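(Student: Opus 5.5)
Let $X$ be a $d$-space with property Q. We verify Definition \ref{def-WF-space} directly. Fix an open set $U$ and a filtered family $\{K_d : d\in D\}\subseteq Q(X)$, where we index by a directed set $D$ so that $d\leq d'$ implies $K_{d'}\subseteq K_d$. Assume that $K_d\not\subseteq U$ for every $d\in D$; we aim to show that $\bigcap_{d\in D}K_d\not\subseteq U$.

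The first step uses the $Q$-semilattice condition. Put $a_d=\bigwedge K_d$, which exists by condition (1). If $d\leq d'$, then $K_{d'}\subseteq K_d$, so $a_d\leq a_{d'}$. Hence $\{a_d : d\in D\}$ is directed. Since $X$ is a $d$-space, $\Omega X$ is a dcpo, and so $a=\bigvee^{\uparrow}_{d\in D}a_d$ exists. Every $y\in\bigcap_{d} K_d$ satisfies $a_d\leq y$ for all $d$, hence $a\leq y$. Therefore $\bigcap_d K_d\subseteq \ua a$.

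The second step is to locate points outside $U$. Consider the closed set $C=X\setminus U$, which meets every $K_d$. We would like to find an element of $C$ that lies above $a$. It is enough to show $a\notin U$, because then $a\in C$. We would then still need a point of $\bigcap_d K_d$ outside $U$, and $a$ itself need not belong to the $K_d$. So we need a better strategy.

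The plan is to argue by contradiction using property Q(2) with a well-chosen $x$. Apply Lemma \ref{t Rudin} to get a minimal irreducible closed $A\subseteq C$ meeting all $K_d$. By Proposition \ref{wfrudinc} it suffices to show $A=\overline{\{x\}}$ for some $x$: then $A\cap K_d\neq\emptyset$ gives $x\in K_d$ for all $d$, since the $K_d$ are upper sets and every point of $A$ lies below $x$. This yields $x\in\bigcap_dK_d\setminus U$.

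To produce $x$, we show $A\subseteq{\downarrow}a$ and $a\in A$. For the inclusion, suppose some $x\in A$ has $x\not\leq a$. Property Q(2) then gives $\varphi\in\prod_d K_d$ and an upper bound $u$ of $\varphi(D)$ with $x\not\leq u$. Then $A\cap{\downarrow}u$ is a closed set meeting every $K_d$ (at $\ua\varphi(d)$, using upward closure), and it is properly smaller than $A$ because it misses $x$. This contradicts minimality, so $A\subseteq{\downarrow}a$. For the membership $a\in A$, pick $x_d\in A\cap K_d$; then $a_d\leq x_d$, so $a_d\in A$ since $A$ is a lower set. As $A$ is closed and the topology is contained in $\sigma(X)$, $A$ is Scott closed, hence $a=\bigvee^{\uparrow} a_d\in A$. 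Thus $A=\overline{\{a\}}$, and with $x=a$ we conclude $a\in\bigcap_d K_d\setminus U$.

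The main obstacle is the minimality step. One must check that $A\cap{\downarrow}u$ genuinely meets every $K_d$, which holds since $\varphi(d)\leq u$ and $\varphi(d)\in K_d$ with $\varphi(d)\in{\downarrow}u$. One must also make sure that $\varphi(d)\in A$; if it is not, we replace $A\cap{\downarrow}u$ by ${\downarrow}u\cap C$ and re-run Lemma \ref{t Rudin} inside $A\cap\downarrow u$. Handling this carefully is the crux. If it fails, the fallback is to take $\varphi(d)$ as the witness point and intersect with $C$ instead of $A$, which will require the minimality of $A$ among all closed sets meeting the $K_d$.
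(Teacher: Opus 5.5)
There is a genuine gap, and it is where you suspected. You apply property Q(2) to the original family $\{K_d : d\in D\}$, so $\varphi(d)$ lies in $K_d$ but need not lie in $A$. Then $A\cap{\downarrow}u$ need not meet $K_d$, and the minimality of $A$ cannot be used. Neither fallback helps. $\varphi(d)$ need not lie in $C=X\setminus U$ either, and minimality of $A$ only says that no \emph{proper closed subset of $A$} meets every $K_d$. It says nothing about ${\downarrow}u$ or ${\downarrow}u\cap C$.

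In fact your intermediate claim is false, not just unproved. Take the four-element lattice $\{0,p,q,1\}$ with $p,q$ incomparable and the Alexandroff topology. It is a $d$-space with property Q: a filtered family of nonempty upper sets has a least member $K^*$, and if $x\not\leq\bigwedge K^*$, let $\varphi$ be constant at some $m\in K^*$ with $x\not\leq m$. Now take the constant family $K_d=\{p,q,1\}$ and $U=\{q,1\}$. Then $A=\{0,p\}$ and $a=p\wedge q=0$. So $A\not\subseteq{\downarrow}a$, and $a\notin\bigcap_d K_d$. No choice of point built from $\bigwedge K_d$ can work.

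The missing idea, which is what the paper does, is to apply property Q to the family $\{{\uparrow}(K_d\cap A) : d\in D\}$ instead.

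\begin{enumerate}
\item Each $K_d\cap A$ is compact, being a closed subset of a compact set, so ${\uparrow}(K_d\cap A)\in Q(X)$.
\item The family is filtered because $\{K_d\}$ is.
\item Its infima $k_d=\bigwedge(K_d\cap A)$ lie in $A$, since $A$ is a lower set.
\item By Scott-closedness of $A$, $k=\bigvee^{\uparrow}_{d\in D}k_d\in A$.
\item Suppose $x\in A$ and $x\not\leq k$. Property Q gives $\varphi(d)\in{\uparrow}(K_d\cap A)$ and an upper bound $u$ of $\varphi(D)$ with $x\not\leq u$.
\item Each $\varphi(d)$ lies above some $y_d\in K_d\cap A$, so $y_d\in K_d\cap A\cap{\downarrow}u$. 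Hence the closed set $A\cap{\downarrow}u$ meets every $K_d$.
\item Minimality gives $A\subseteq{\downarrow}u$, contradicting $x\not\leq u$.
\end{enumerate}

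So $A={\downarrow}k=\overline{\{k\}}$. With $k$ in place of your $a$, the rest of your argument goes through: $k\in\bigcap_d K_d\setminus U$.
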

\begin{proof} Let $X$ be a $d$-space having property Q, $\{K_d : d\in D\}\subseteq Q(X)$ a filtered family and $U\in \mathcal O(X)$ satisfying $\bigcap_{d\in D}K_d\subseteq U$. If $K_d\nsubseteq U$ for each $d\in D$, then by Lemma \ref{t Rudin}, $X\setminus U$ contains a minimal irreducible closed subset $A$ that still meets all $K_d$. Since $X$ has property Q, $k_d=\bigwedge (K_d\cap A)=\bigwedge {\uparrow} (K_d\cap A)$ exists for each $d\in D$.

{\bf Claim 1.} $\{k_d : d\in D\}\subseteq A$ is directed.

For each $d\in D$, as $A={\downarrow} A$ and $K_d\cap A\neq\emptyset$, we have $k_d=\bigwedge (K_d\cap A)\in A$. By the filteredness of $\{K_d : d\in D\}$, $\{k_d : d\in D\}\in \mathcal D(X)$.

{\bf Claim 2.} $\bigvee^{\uparrow}_{d\in D}k_d$ exists and $\bigvee^{\uparrow}_{d\in D} k_d\in A$.

As $X$ is a $d$-space, $X$ is a dcpo and $A$ is a Scott closet subset of $X$, whence $\bigvee^{\uparrow}_{d\in D}k_d$ exists and $\bigvee^{\uparrow}_{d\in D}k_d\in A$.

{\bf Claim 3.} $\bigvee^{\uparrow}_{d\in D}k_d\in\bigcap_{d\in D}{\uparrow} (K_d\cap A)$.

Let $k=\bigvee^{\uparrow}_{d\in D}k_d$. If $k\not\in\bigcap_{d\in D}{\uparrow} (K_d\cap A)$, then $k\not\in {\uparrow} (K_{d_0}\cap A)$ for some $d_0\in D$. Hence for each $g\in K_{d_0}\cap A$, $g\not\leq k=\bigwedge (K_d\cap A)=\bigwedge {\uparrow}(K_d\cap A)$. Since $X$ has property Q, there is $\varphi_g\in \prod\limits_{d\in D}\!\!{\uparrow}(K_d\cap A)$ and $u(g)\in (\varphi_g(D))^{\uparrow}$ such that $g\not\leq u(g)$. For each $d\in D$, by $\varphi_g(d)\leq u(g)$, we have $\varphi_g(d)\in {\uparrow}(K_d\cap A)\cap {\downarrow}u(g)$, and consequently, $K_d\cap A\cap {\downarrow} u(g)\neq \emptyset$. Then by the minimality of $A$, we get $A=A\cap {\downarrow} u(g)$ or, equivalently, $A\subseteq {\downarrow} u(g)$. Therefore, $A\subseteq \bigcap\limits_{g\in K_{d_0}\cap A}{\downarrow} u(g)$, whence $K_{d_0}\cap A\cap \bigcap\limits_{g\in K_{d_0}\cap A}{\downarrow} u(g)=K_{d_0}\cap A\neq\emptyset$. Select $g^*\in K_{d_0}\cap A\cap \bigcap\limits_{g\in K_{d_0}\cap A}{\downarrow} u(g)$. Then $g^*\leq u(g^*)$, a contradiction. Thus $k\in\bigcap_{d\in D}{\uparrow} (K_d\cap A)$.

By Claim 3 we have $\bigvee^{\uparrow}_{d\in D}k_d\in \bigcap_{d\in D}{\uparrow} (K_d\cap A)\subseteq \bigcap_{d\in D}K_d\subseteq U$, which contradicts Claim 2. Therefore, $K_d\subseteq U$ for some $d\in D$, and this completes the proof that $X$ is well-filtered.
\end{proof}

We all know that the Smyth power space of a sober space is sober (Theorem \ref{Schalk-Heckman-Keimel theorem}). However, whether the the Smyth power space of a well-filtered space is still well-filtered was unknown for along time. In fact, Heckmann asked this question in \cite[4.6 Further classes and open problems]{Heckmann-1992} as an open problem. In the following, as an application of Theorem \ref{theor-property-W-d-space-WF}, we will give an affirmative answer to Heckmann's problem. To this end, we first give the following.

\begin{proposition}\label{prop-Smyth-power-space-property-Q} For a $T_0$-space $X$, its Smyth power space $P_S(X)$ has property Q.
\end{proposition}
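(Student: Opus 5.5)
The plan is to verify the two conditions of Definition \ref{def-property-Q} directly for $P_S(X)$. Recall that its specialization order is the Smyth order $\sqsubseteq$, so meets and joins in $\Omega P_S(X)$ are taken in $(Q(X),\sqsubseteq)$. Both conditions should reduce to elementary set manipulations once we know what infima and directed suprema look like in $Q(X)$.

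\emph{Condition (1): $P_S(X)$ is a $Q$-semilattice.} Let $\mathcal K\in Q(P_S(X))$. I claim $\bigwedge\mathcal K=\bigcup\mathcal K$ in $(Q(X),\sqsubseteq)$.
\begin{itemize}
\item By Lemma \ref{lem-union-operator-continuous}, $\bigcup\mathcal K\in Q(X)$; it is nonempty because $\mathcal K\neq\emptyset$.
\item It is a lower bound: $K\subseteq\bigcup\mathcal K$ gives $\bigcup\mathcal K\sqsubseteq K$ for every $K\in\mathcal K$.
\item It is the greatest lower bound: if $G\in Q(X)$ satisfies $G\sqsubseteq K$, i.e.\ $K\subseteq G$, for all $K\in\mathcal K$, then $\bigcup\mathcal K\subseteq G$, i.e.\ $G\sqsubseteq\bigcup\mathcal K$.
\end{itemize}
This step is routine once the cited lemma is available.

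\emph{Condition (2).} Let $\{\mathcal K_d : d\in D\}\subseteq Q(P_S(X))$ be filtered, so $\{\bigcup\mathcal K_d : d\in D\}$ is directed in the Smyth order. Suppose $K=\bigvee^{\uparrow}_{d\in D}\bigcup\mathcal K_d$ exists in $Q(X)$. By Lemma \ref{lem-sups-in-Smyth}, $K=\bigcap_{d\in D}\bigcup\mathcal K_d$.

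Now take $H\in Q(X)$ with $H\not\sqsubseteq K$, that is, $K\not\subseteq H$. Choose a point $x\in K\setminus H$. Since $x\in\bigcup\mathcal K_d$ for each $d$, the axiom of choice gives $\varphi\in\prod_{d\in D}\mathcal K_d$ with $x\in\varphi(d)$ for all $d$. Set $u=\ua x\in Q(X)$. Then:
\begin{itemize}
\item Each $\varphi(d)$ is saturated and contains $x$, so $\ua x\subseteq\varphi(d)$, i.e.\ $\varphi(d)\sqsubseteq u$. Thus $u\in(\varphi(D))^{\uparrow}$.
\item Since $x\notin H$, we have $\ua x\not\subseteq H$, i.e.\ $H\not\sqsubseteq u$, as required.
\end{itemize}

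The only point needing care is identifying the directed supremum with the intersection, which is exactly Lemma \ref{lem-sups-in-Smyth}. There is no real obstacle beyond this; the whole argument is a direct computation in $(Q(X),\sqsubseteq)$.
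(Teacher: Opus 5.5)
Your proof is correct and follows the paper's argument: both identify $\bigwedge\mathcal K=\bigcup\mathcal K$ using Lemma~\ref{lem-union-operator-continuous}, identify the directed supremum with $\bigcap_{d\in D}\bigcup\mathcal K_d$ using Lemma~\ref{lem-sups-in-Smyth}, and take ${\uparrow}x$, for a point $x$ of this intersection outside the given set, as the required upper bound. The only difference is cosmetic: the paper first rewrites $\bigcap_{d\in D}\bigcup\mathcal K_d$ as $\bigcup_{\psi}\bigcap_{d\in D}\psi(d)$ and then picks $\varphi$ and the point, while you pick the point first and choose $\varphi$ through it, which is the same choice step.
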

\begin{proof} Let $\mathcal K\in\mk(P_S(X))$. Then by Lemma \ref{lem-union-operator-continuous}, $\bigcup \mathcal K\in Q(X)$. It is straightforward to verify that $\bigcup \mathcal K=\bigwedge \mathcal K$ in $Q(X)$ (with the Smyth order). Hence $P_S(X)$ satisfies condition (1) of Definition \ref{def-property-Q}. Now we show that condition (2) of Definition \ref{def-property-Q} holds for $P_S(X)$. Assume that $\{\mathcal K_d : d\in D\}\subseteq Q(P_S(X))$ is filtered, $K\in \mk (X)$, $\bigvee^{\uparrow}_{d\in D}\bigwedge\mathcal K_d$ exists in $\mk (X)$ and $K\not\sqsubseteq \bigvee^{\uparrow}_{d\in D}\bigwedge\mathcal K_d$. Then by Lemma \ref{lem-sups-in-Smyth} and Lemma \ref{lem-union-operator-continuous}, $\bigvee^{\uparrow}_{d\in D}\bigwedge\mathcal K_d=\bigcap\limits_{d\in D}\bigcup \mathcal K_d$ and $\bigcap\limits_{d\in D}\bigcup \mathcal K_d=\bigcup\limits_{\psi\in \prod\limits_{d\in D}\!\!\mathcal K_d}\bigcap\limits_{d\in D}\psi(d)\not\subseteq K$. Hence there is $\varphi\in \prod\limits_{d\in D}\!\!\mathcal K_d$ such that $\bigcap\limits_{d\in D}\varphi(d)\not\subseteq K$. Select $q\in \bigcap\limits_{d\in D}\varphi(d)\setminus K$. Then ${\uparrow} q$ is an upper bound of $\varphi(D)$ in $\mk (X)$ and $K\not\sqsubseteq {\uparrow} q$. Therefore, $P_S(X)$ satisfies condition (2) of Definition \ref{def-property-Q}, and hence $P_S(X)$ has property Q.
\end{proof}

% By  Proposition \ref{prop-WF-filtered-Smyth-power-d-space}, Corollary \ref{cor-sober-WF-d-space}, Theorem \ref{theor-property-W-d-space-WF} and Proposition \ref{prop-Smyth-power-space-property-Q}, we get the following result.

\begin{theorem}\label{Smythwf}
	For a $T_0$ space $X$, the following conditions are equivalent:
\begin{enumerate}[\rm (1)]
		\item $X$ is well-filtered.
        \item $P_S(X)$ is well-filtered.
\end{enumerate}
\end{theorem}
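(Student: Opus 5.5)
The plan is to prove the two implications separately, using the earlier results of the paper for each.

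For (1) $\Rightarrow$ (2), assume $X$ is well-filtered. By Proposition \ref{prop-WF-filtered-Smyth-power-d-space}, the Smyth power space $P_S(X)$ is then a $d$-space. By Proposition \ref{prop-Smyth-power-space-property-Q}, $P_S(X)$ has property Q for any $T_0$-space $X$, so in particular here. Theorem \ref{theor-property-W-d-space-WF} then gives directly that $P_S(X)$ is well-filtered. This direction is just the combination of these three results; all the substantive work was done in Theorem \ref{theor-property-W-d-space-WF} and Proposition \ref{prop-Smyth-power-space-property-Q}.

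For (2) $\Rightarrow$ (1), assume $P_S(X)$ is well-filtered. Let $\{K_d : d\in D\}\subseteq \mk(X)$ be filtered and $U\in\mathcal O(X)$ with $\bigcap_{d\in D}K_d\subseteq U$. I would transport this family into $P_S(X)$ using the embedding $\xi_X$.

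\begin{itemize}
\item By Corollary \ref{cor-sober-WF-d-space}, $P_S(X)$ is a $d$-space. Its specialization order is the Smyth order, so the directed family $\{K_d\}$ in $\mk(X)$ has a supremum $K\in\mk(X)$.
\item By Lemma \ref{lem-sups-in-Smyth}, this supremum is $K=\bigcap_{d\in D}K_d$, so $K\in\Box U$.
\item Since $\Box U$ is open in $P_S(X)$, it is Scott open (because $P_S(X)$ is a $d$-space). Hence $K_d\in\Box U$, i.e. $K_d\subseteq U$, for some $d\in D$.
\end{itemize}

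This shows $X$ is well-filtered.

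An alternative for (2) $\Rightarrow$ (1) is to note that $X$ is homeomorphic via $\xi_X$ to a subspace of $P_S(X)$; I avoid this route because well-filteredness is not obviously inherited by arbitrary subspaces. The obstacle is therefore only to pick the right route for (2) $\Rightarrow$ (1), and the Scott-openness argument above is the one I would commit to.
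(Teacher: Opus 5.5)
Your proof is correct and takes essentially the paper's approach. Direction (1)$\Rightarrow$(2) is exactly the paper's combination of Proposition \ref{prop-WF-filtered-Smyth-power-d-space}, Proposition \ref{prop-Smyth-power-space-property-Q} and Theorem \ref{theor-property-W-d-space-WF}. For (2)$\Rightarrow$(1), the paper likewise uses Corollary \ref{cor-sober-WF-d-space} to make $P_S(X)$ a $d$-space and then simply cites Proposition \ref{prop-WF-filtered-Smyth-power-d-space}, whereas you unfold that proposition's easy direction via Lemma \ref{lem-sups-in-Smyth} and the Scott-openness of $\Box U$.
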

\begin{proof}
(1) $\Rightarrow$ (2): By Proposition \ref{prop-WF-filtered-Smyth-power-d-space} and Proposition \ref{prop-Smyth-power-space-property-Q}, $P_S(X)$ is a $d$-space and has property Q, and hence it is well-filtered by Theorem \ref{theor-property-W-d-space-WF}.

(2) $\Rightarrow$ (1): By Proposition \ref{prop-WF-filtered-Smyth-power-d-space} and Corollary \ref{cor-sober-WF-d-space}.
\end{proof}
%\begin{proof}

%(1) $\Rightarrow$ (2): By Lemma \ref{lem-sups-in-Smyth} and Corollary \ref{cor-WF-filtered-compact-sets-cap-compact}, $\mk (X)$ is a dcpo and $\Box U\in \sigma (\mk (X))$ for any $U\in O(X)$, namely, $P_S (X)$ is a $d$-space). Then by Theorem \ref{theor-property-W-d-space-WF} and Proposition \ref{prop-Smyth-power-space-property-Q}, $P_S(X)$ is well-filtered.

%(2) $\Rightarrow$ (1): Suppose that $\mathcal K\subseteq \mathord{\mathsf K}(X)$ is filtered, $U\in \mathcal O(X)$, and $\bigcap \mathcal K \subseteq U$. Let $\widetilde{\mathcal K}=\{\ua_{\mk (X)}K : K\in \mathcal K\}$. Then $\widetilde{\mathcal K}\subseteq \mk (P_S(X))$ is filtered and $\bigcap \widetilde{\mathcal K} \subseteq \Box U$. By the well-filteredness of $P_S(X)$, there is a $K\in \mathcal K$ such that $\ua_{\mk (X)}K\subseteq \Box U$, and whence $K\subseteq U$, proving that $X$ is well-filtered. (or $P_s(X)$ is well-filtered, then it is a $d$-space, or equivalently, $X$ is well-filtered).
%\end{proof}

The above theorem was first proved by Xu, Xi and Zhao \cite{Xu-Xi-Zhao-2021} using a different method. Our work shows that Theorem \ref{Smythwf} can be viewed as a natural corollary of Theorem \ref{theor-property-W-d-space-WF}.

\section{An application}

In this section, as an application of Theorem \ref{theor-property-W-d-space-WF}, we investigate the well-filteredness of order-compatible $d$-topologies on a complete lattice. It is shown that for a complete lattice $L$ and an order-compatible $d$-topology $\tau$ on it, if $L$ possesses a certain distributivity, $(L, \tau)$ is well-filtered.

\begin{definition}\label{def-DQ-distributive-lattice} Let $L$ be a complete lattice and $\tau$ an order-compatible $T_0$-topology on $P$.
\begin{enumerate}[\rm (1)]
\item $L$ is called \emph{directed}-\emph{arbitrary} \emph{distributive} if it satisfies
\vskip 3mm
(DA)  \qquad \qquad \qquad \qquad \qquad \qquad  $\bigvee^{\uparrow}_{d\in D} \bigwedge A_d=\bigwedge\limits_{\psi\in\prod\limits_{d\in D}\!\!\!A_d}\bigvee\limits_{d\in D}\psi (d)$
\vskip 3mm
\noindent for any filtered family $\{A_d : d\in D\}\subseteq 2^L\setminus \{\emptyset\}$.

\item $L$ is called \emph{directed}-\emph{compact} \emph{distributive} with respect to $\tau$ if it satisfies
\vskip 3mm
($\tau$-DQ)  \qquad \qquad \qquad \qquad \qquad ~~  $\bigvee^{\uparrow}_{d\in D} \bigwedge K_d=\bigwedge\limits_{\psi\in\prod\limits_{d\in D}\!\!\!K_d}\bigvee\limits_{d\in D}\psi (d)$
\vskip 3mm
\noindent for any filtered family $\{K_d : d\in D\}\subseteq Q((L, \tau))$.
\end{enumerate}
\end{definition}

For a complete lattice $L$, the directed-compact distributivity is a relative property. More precisely, it is related to the poset of nonempty compact saturated sets of $(L, \tau)$.  Clearly, (DA) implies ($\tau$-DQ). The converse implication does not hold in general, as shown in the following example.

\begin{example}\label{exam-DQ-not-imply-DA} Let $L=\mathbb{N}^+\cup\{\top, \bot\}$ and define an order on $L$ as follows (see Figure 1):
\begin{enumerate}[\rm (i)]
\item $\bot <n<\top$ for any $n\in\mathbf{N}^+$, and
\item $n$ and $m$ are incomparable for any $n, m\in \mathbf{N}^+$ with $n\neq m$.
\end{enumerate}

% on $P$ by $x\leq_P y$ iff $x=y$ or $y\in \mathbb{N}^+$ and $x=\bot$ or $x\in \mathbb{N}^+$ and $y=\top$ (see Figure 1).

 \begin{figure}[ht]
	\centering
	\includegraphics[height=4cm,width=6cm]{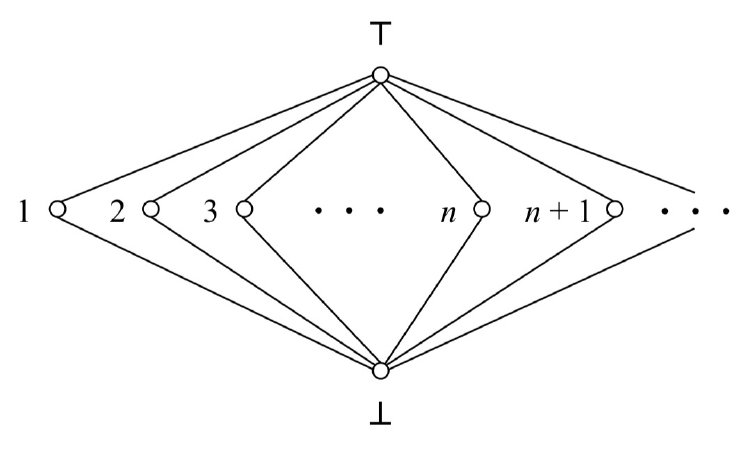}
	\caption{The complete lattice $L$ in Example \ref{exam-DQ-not-imply-DA}}
\end{figure}

 Clearly, $L$ is a complete lattice and it is Noetherian. Consider the Scott space $(L, \sigma (L))$. Then
\begin{enumerate}[\rm (a)]
    \item For any $H\subseteq \mathbb{N}^+$ with $|H|\geq 2$, $\wedge H=\bot$ and $\vee H=\top$.
    \item $\sigma (L)=\alpha (L)$ and $Q((L, \sigma (L))=\mathbf{Fin}~\!L$ by Lemma \ref{lem-compact-saturated-in-Alexanderoff-topologyin}             and Proposition \ref{prop-Noethrian-Alexandroff-topology}.
    \item $L$ is directed-compact distributive with respect to $\sigma (L)$.

    Suppose that $\{{\uparrow}F_d : d\in D\}\subseteq Q((L, \sigma))=\mathbf{Fin}~\!L$ if filtered. We will show that $\bigvee^{\uparrow}_{d\in D}\bigwedge {\uparrow}F_d=\bigwedge\limits_{\psi\in \prod\limits_{d\in D}\!\!{\uparrow}F_d}\bigvee\limits_{d\in D}\psi (d)$.

    \vspace{0.2cm}

    {\bf Case 1:} There is $d_0\in D$ such that ${\uparrow}F_{d_0}={\uparrow} \top=\{\top\}$.

    \vspace{0.1cm}

    As ${\uparrow}F_{d_0}=\{\top\}$, $\bigwedge {\uparrow}F_{d_0}=\top$. So $\bigvee^{\uparrow}_{d\in D}\bigwedge {\uparrow}F_d=\top$. For any $\psi\in \prod\limits_{d\in D}\!\!{\uparrow}F_d$, we have $\psi (d_0)=\top$, whence $\bigvee\limits_{d\in D}\psi (d)=\top$. Therefore, $\bigwedge\limits_{\psi\in \prod\limits_{d\in D}\!\!{\uparrow}F_d}\bigvee\limits_{d\in D}\psi (d)=\top= \bigvee^{\uparrow}_{d\in D}\bigwedge {\uparrow}F_d$.

 \vspace{0.1cm}

    {\bf Case 2:} ${\uparrow}F_{d}={\uparrow} \bot=L$ for all $d\in D$.

 \vspace{0.1cm}

    In this case, $\bigvee^{\uparrow}_{d\in D}\bigwedge {\uparrow}F_d=\bot$. Define $\psi_{\bot}\in \prod\limits_{d\in D}\!\!{\uparrow}F_d$ by $\psi_\bot(d)=\bot$ for each $d\in D$. Then $\bigvee\limits_{d\in D}\psi_\bot (d)=\bot$. Thus $\bigwedge\limits_{\psi\in \prod\limits_{d\in D}\!\!{\uparrow}F_d}\bigvee\limits_{d\in D}\psi (d)=\bot=\bigvee^{\uparrow}_{d\in D}\bigwedge {\uparrow}F_d$.

 \vspace{0.1cm}

    {\bf Case 3:} ${\uparrow}F_{d}\neq \{\top\}$ for all $d\in D$ and there is $d_1\in D$ such that ${\uparrow}F_{d_1}\neq L$.

    \vspace{0.1cm}

    Let $D^\ast=\{d\in D : {\uparrow}F_{d_1}\neq L\}$. Then $d_1\in D^\ast$ and hence $D^\ast\neq\emptyset$. It is straightforward to verify
that $\{{\uparrow}F_d : d\in D^\ast\}$ is filtered. For each $d\in D^\ast$, ${\uparrow}F_d={\uparrow}min(F_d)$ and $min(F_d)\subseteq \mathbb{N}^+$. For $d_1, d_2\in D^\ast$, it is easy to see that ${\uparrow}F_{d_1}\subseteq {\uparrow}F_{d_2}$ iff $min(F_{d_1})\subseteq min(F_{d_2})$. Therefore, $\{min(F_d) : d\in D^\ast\}\subseteq (\mathbb{N}^+)^{(<\omega)}$ is filtered, and hence it has a least element, say $min(F_{d_2})$ ($d_2\in D^\ast$). Then ${\uparrow}F_{d_2}={\uparrow}min(F_{d_2})\subseteq {\uparrow}min(F_d)={\uparrow}F_d$ for any $d\in D$. It follows that  $\bigvee^{\uparrow}_{d\in D}\bigwedge {\uparrow}F_d=\bigwedge {\uparrow}F_{d_2}=\bigwedge min(F_{d_2})$. For each $u\in min(F_{d_2})$, define $\psi_{u}\in \prod\limits_{d\in D}\!\!{\uparrow}F_d$ by $\psi_u(d)=u$ for each $d\in D$. Then $\bigvee\limits_{d\in D}\psi_u (d)=u$. So $\bigwedge min(F_{d_2})=\bigvee^{\uparrow}_{d\in D}\bigwedge {\uparrow}F_d\leq \bigwedge\limits_{\psi\in \prod\limits_{d\in D}\!\!{\uparrow}F_d}\bigvee\limits_{d\in D}\psi (d)\leq \bigwedge\limits_{u\in min(F_{d_2})} \bigvee\limits_{d\in D}\psi_u (d)=\bigwedge min(F_{d_2})$, and hence $\bigwedge\limits_{\psi\in \prod\limits_{d\in D}\!\!{\uparrow}F_d}\bigvee\limits_{d\in D}\psi (d)=\bigvee^{\uparrow}_{d\in D}\bigwedge {\uparrow}F_d$.

So to sum up, we have $\bigvee^{\uparrow}_{d\in D}\bigwedge {\uparrow}F_d=\bigwedge\limits_{\psi\in \prod\limits_{d\in D}\!\!{\uparrow}F_d}\bigvee\limits_{d\in D}\psi (d)$. Thus $L$ is directed-compact distributive with respect to $\sigma (L)$.

    \item $L$ is not directed-arbitrary distributive.

    For each $n\in \mathbb{N}^+$, let $A_n=\mathbb{N}^+\setminus \{1, 2, ..., n\}$. Then $\{A_n : n\in\mathbb{N}^+\}\subseteq 2^L\setminus \{\emptyset\}$ is filtered and $\bigwedge A_m=\bot$ for all $m\in\mathbb{N}^+$. Hence $\bigvee^{\uparrow}_{n\in \mathbb{N}^+}\bigwedge A_n=\bot$. For any $\psi\in \prod\limits_{n\in \mathbb{N}^+}\!\!A_n$, we have $\psi (\psi(1))\in \mathbb{N}^+\setminus \{1, 2, ..., \psi (1)\}$, whence $\psi (\psi (1))\neq \psi (1)$ (indeed, $\psi(\mathbb{N}^+$ is infinite). So $\bigvee\limits_{n\in \mathbb{N}^+}\psi (n)=\top$ by (a). Therefore, $\bigwedge\limits_{\psi\in \prod\limits_{n\in \mathbb{N}^+}\!\!\!A_n} \bigvee\limits_{n\in \mathbb{N}^+}\psi (n)=\top > \bot=\bigvee^{\uparrow}_{n\in \mathbb{N}^+}\bigwedge A_n$. Thus $L$ is not directed-arbitrary distributive.
\end{enumerate}
\end{example}

\begin{theorem}\label{theor-DQ-WF} Let $L$ be a complete lattice and $\tau$ an order-compatible $d$-topology on $L$. If $L$ is directed-compact distributive with respect to $\tau$, then $(L, \tau)$ is well-filtered.
\end{theorem}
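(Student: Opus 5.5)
The plan is to apply Theorem \ref{theor-property-W-d-space-WF}. Since $\tau$ is an order-compatible $d$-topology, $(L,\tau)$ is a $d$-space whose specialization order is the original order of $L$. So it suffices to show that $(L,\tau)$ has property Q.

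Condition (1) of Definition \ref{def-property-Q} is immediate. $L$ is a complete lattice, so $\bigwedge G$ exists for every $G\in Q((L,\tau))$, and $(L,\tau)$ is a $Q$-semilattice.

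For condition (2), let $\{K_d : d\in D\}\subseteq Q((L,\tau))$ be filtered and $x\in L$. The join $\bigvee^{\uparrow}_{d\in D}\bigwedge K_d$ always exists because $L$ is complete. Suppose $x\not\leq \bigvee^{\uparrow}_{d\in D}\bigwedge K_d$. By ($\tau$-DQ) this join equals
\[
\bigwedge\limits_{\psi\in\prod\limits_{d\in D}\!\!K_d}\bigvee\limits_{d\in D}\psi (d).
\]
Hence $x$ is not a lower bound of the family $\{\bigvee_{d\in D}\psi(d) : \psi\in\prod_{d\in D}K_d\}$. Note that the product is nonempty by the axiom of choice, since each $K_d\neq\emptyset$. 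So there is some $\varphi\in\prod_{d\in D}K_d$ with $x\not\leq \bigvee_{d\in D}\varphi(d)$. Put $u=\bigvee_{d\in D}\varphi(d)$. Then $u\in(\varphi(D))^{\uparrow}$ and $x\not\leq u$, which is exactly condition (2).

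Thus $(L,\tau)$ is a $d$-space with property Q, and Theorem \ref{theor-property-W-d-space-WF} gives that $(L,\tau)$ is well-filtered. There is no real obstacle. The only points to check are that the order used in property Q is the specialization order, which is guaranteed by order-compatibility, and that the meet over the product is handled correctly: if the product were empty the meet would be $\top$, but nonemptiness is ensured as above.
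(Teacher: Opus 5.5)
Your proof is correct and is essentially the paper's argument: you get condition (1) of property Q from completeness of $L$, and condition (2) by using ($\tau$-DQ) to find $\varphi$ with $x\not\leq\bigvee_{d\in D}\varphi(d)$ and taking $u=\bigvee_{d\in D}\varphi(d)$; you then invoke Theorem \ref{theor-property-W-d-space-WF}. Your appeal to the axiom of choice is harmless but unnecessary, since $x\not\leq\bigwedge$ of a family already forces that family to be nonempty (the empty meet is the top element).
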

\begin{proof} As $L$ is a complete lattice, $L$ is a $Q$-semilattice. Now we show that $(L, \tau)$ satisfies condition (2) of Definition \ref{def-property-Q}. Let $\{K_d : d\in D\}\subseteq Q((L, \tau))$ be filtered and $x\in L$ satisfying $x\not\leq \bigvee^{\uparrow}_{d\in D}\bigwedge K_d$. Then by the directed-compact distributivity of $L$, we have $x\not\leq \bigvee^{\uparrow}_{d\in D}\bigwedge K_d=\bigwedge\limits_{\psi\in\prod\limits_{d\in D}\!\!\!K_d}\bigvee\limits_{d\in D}\psi (d)$, and consequently,  there is $\varphi\in \prod\limits_{d\in D}\!\!K_d$ such that $x\not\leq \bigvee\limits_{d\in D}\psi (d)$. Let $q=\bigvee\limits_{d\in D}\psi (d)$. Then $q\in (\varphi (D))^{\uparrow}$ and $x\not\leq q$. Therefore, condition (2) of Definition \ref{def-property-Q} holds for $(L, \tau)$, and hence $(L, \tau)$ has property Q. By Theorem \ref{theor-property-W-d-space-WF}, $(L, \tau)$ is well-filtered.
\end{proof}

\begin{corollary}\label{cor-DA-WF} Let $L$ be a complete lattice and $\tau$ an order-compatible $d$-topology on $L$. If $L$ is directed-arbitrary distributive, then $(L, \tau)$ is well-filtered.
\end{corollary}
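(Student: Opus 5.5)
The plan is to reduce Corollary \ref{cor-DA-WF} to Theorem \ref{theor-DQ-WF}. Concretely, I will show that (DA) implies ($\tau$-DQ) for every order-compatible $d$-topology $\tau$ on $L$, and then invoke Theorem \ref{theor-DQ-WF} directly.

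Let $\{K_d : d\in D\}\subseteq Q((L,\tau))$ be a filtered family. Each $K_d$ is a nonempty subset of $L$, so $\{K_d : d\in D\}\subseteq 2^L\setminus\{\emptyset\}$. The key observation is that "filtered" means the same thing in both settings.

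\begin{itemize}
\item In $Q((L,\tau))$ with the Smyth order, filteredness says: for $d_1,d_2\in D$ there is $d_3\in D$ with $K_{d_3}\subseteq K_{d_1}\cap K_{d_2}$.
\item In the hypothesis of (DA), the family $\{A_d : d\in D\}\subseteq 2^L\setminus\{\emptyset\}$ is filtered with respect to inclusion, in exactly the same sense. The paper's Example \ref{exam-DQ-not-imply-DA}(d) uses the decreasing chain $A_n$, which confirms this reading.
\end{itemize}

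So the family $\{K_d\}$ is filtered in the sense required by (DA). Applying (DA) with $A_d=K_d$ then gives precisely the equation ($\tau$-DQ) for this family. Since the family was arbitrary, $L$ is directed-compact distributive with respect to $\tau$. This is the implication the paper already notes ("Clearly, (DA) implies ($\tau$-DQ)").

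With $L$ a complete lattice, $\tau$ an order-compatible $d$-topology, and ($\tau$-DQ) established, Theorem \ref{theor-DQ-WF} yields that $(L,\tau)$ is well-filtered. There is no real obstacle. The only point needing care is confirming that the notion of filteredness in (DA) is the inclusion-reversed one matching the Smyth order, so that every filtered family in $Q((L,\tau))$ is admissible in (DA).
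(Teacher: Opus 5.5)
Your proposal is correct and matches the paper's implicit argument: the paper states the corollary immediately after Theorem \ref{theor-DQ-WF} and earlier remarks that (DA) clearly implies ($\tau$-DQ), since a filtered family in $Q((L,\tau))$ is a filtered family of nonempty subsets of $L$, so ($\tau$-DQ) is (DA) with $A_d=K_d$. One small wording point: the condition $K_{d_3}\subseteq K_{d_1}\cap K_{d_2}$ that you state is the right one, but it is filteredness with respect to inclusion (equivalently, directedness in the Smyth order), not filteredness in the Smyth order.
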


Theorem \ref{theor-DQ-WF} and Corollary \ref{cor-DA-WF} can be restated as follows.

\begin{theorem}\label{theor-DQ-WF-topological-version} Let $X$ be a $d$-space. If $\Omega~\!\! X$ is a complete lattice and directed-compact distributive with respect to the topology of $X$, then $X$ is well-filtered.
\end{theorem}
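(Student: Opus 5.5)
This is a translation of Theorem \ref{theor-DQ-WF} into topological language, so the plan is to reduce to that theorem. Let $X$ be a $d$-space with topology $\tau$ and $L=\Omega X$ a complete lattice. By the definition of the specialization order, $\tau$ is an order-compatible $T_0$-topology on $L$: the order of $L$ \emph{is} $\leq_\tau$. Since $(L,\tau)=X$ is a $d$-space, $\tau$ is an order-compatible $d$-topology on $L$. We also have $Q((L,\tau))=Q(X)$, so directed-compact distributivity of $\Omega X$ with respect to the topology of $X$ is exactly the condition ($\tau$-DQ) of Definition \ref{def-DQ-distributive-lattice}. Theorem \ref{theor-DQ-WF} then applies and yields that $(L,\tau)=X$ is well-filtered.

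For a self-contained check, I would instead verify property Q directly and invoke Theorem \ref{theor-property-W-d-space-WF}.

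\textbf{Condition (1).} Since $\Omega X$ is complete, every $\bigwedge K$ with $K\in Q(X)$ exists, so $X$ is a $Q$-semilattice.

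\textbf{Condition (2).} Take a filtered family $\{K_d : d\in D\}\subseteq Q(X)$ and $x\not\leq \bigvee^{\uparrow}_{d\in D}\bigwedge K_d$. By ($\tau$-DQ), this supremum equals $\bigwedge_{\psi}\bigvee_{d\in D}\psi(d)$. Since $x$ is not below the meet of these elements, it fails to lie below at least one of them, so some $\varphi\in\prod_{d\in D}K_d$ satisfies $x\not\leq u:=\bigvee_{d\in D}\varphi(d)$. This $u$ lies in $(\varphi(D))^{\uparrow}$, which gives condition (2).

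The only delicate point is bookkeeping rather than mathematics. One has to confirm that the order in "complete lattice" is the specialization order, so that $\tau$ is indeed order-compatible and the meets and joins appearing in property Q are the lattice operations. Once that identification is made, there is no real obstacle.
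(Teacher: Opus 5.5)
Your proposal is correct and is essentially the paper's argument: the paper presents this theorem as a topological restatement of Theorem~\ref{theor-DQ-WF} (with $L=\Omega X$ and $\tau=\mathcal O(X)$, which is an order-compatible $d$-topology with $Q((L,\tau))=Q(X)$), and your direct verification of property Q followed by Theorem~\ref{theor-property-W-d-space-WF} is exactly the paper's proof of Theorem~\ref{theor-DQ-WF}. You also use the same $\varphi$ consistently, where the paper's proof of Theorem~\ref{theor-DQ-WF} slips and writes $\psi$ in place of $\varphi$.
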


\begin{corollary}\label{cor-DA-WF-topological-version} Let $X$ be a $d$-space. If $\Omega~\!\! X$ is a complete lattice and directed-arbitrary distributive, then $X$ is well-filtered.
\end{corollary}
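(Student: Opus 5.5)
The last statement is Corollary \ref{cor-DA-WF-topological-version}. It is a restatement of Corollary \ref{cor-DA-WF}, and the route is to reduce it to Theorem \ref{theor-DQ-WF-topological-version} (equivalently Theorem \ref{theor-DQ-WF}). The plan has three steps.

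\emph{Step 1 (set-up).} Let $X$ be a $d$-space with topology $\tau$, and put $L=\Omega X$. By hypothesis $L$ is a complete lattice. By definition of the specialization order, $\tau$ is an order-compatible topology on $L$, and since $X$ is a $d$-space it is an order-compatible $d$-topology.

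\emph{Step 2 (DA implies $\tau$-DQ).} Take any filtered family $\{K_d : d\in D\}\subseteq Q(X)$. Each $K_d$ is nonempty, so $\{K_d : d\in D\}\subseteq 2^L\setminus\{\emptyset\}$. It is filtered with respect to reverse inclusion, which is exactly the sense required in (DA). Applying (DA) to $A_d=K_d$ gives
$$\bigvee^{\uparrow}_{d\in D}\bigwedge K_d=\bigwedge\limits_{\psi\in\prod\limits_{d\in D}\!\!K_d}\bigvee\limits_{d\in D}\psi(d).$$
This is precisely ($\tau$-DQ), so $L$ is directed-compact distributive with respect to $\tau$. It is the remark "(DA) implies ($\tau$-DQ)" made after Definition \ref{def-DQ-distributive-lattice}. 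The directedness of $\{\bigwedge K_d : d\in D\}$ is automatic, since $K_{d}\supseteq K_{d'}$ implies $\bigwedge K_d\leq\bigwedge K_{d'}$.

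\emph{Step 3 (conclusion).} Theorem \ref{theor-DQ-WF-topological-version}, or equivalently Theorem \ref{theor-DQ-WF} applied to $(L,\tau)=X$, now yields that $X$ is well-filtered.

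No real obstacle is expected. The substantive work was already done in Theorem \ref{theor-property-W-d-space-WF}, via property Q and the filtered topological Rudin lemma. The only point needing a moment's care is in Step 2: the filteredness of $\{K_d\}$ in $Q(X)$ under the Smyth order must coincide with the filteredness assumed in (DA), which it does because both mean reverse inclusion.
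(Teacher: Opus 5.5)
Your proposal is correct and follows the paper's route. The paper presents this corollary as a restatement of Corollary \ref{cor-DA-WF}, which in turn follows from Theorem \ref{theor-DQ-WF} via the remark that (DA) implies ($\tau$-DQ). You carry out that remark explicitly by taking $A_d=K_d$ and checking that the two notions of filteredness agree.
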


By Proposition \ref{prop-upper-topology-WF}, Proposition \ref{prop-Scott-topology-on-complete-lattice-WF} and Corollary \ref{cor-sober-WF-d-space}, the upper topology $\upsilon (L)$ and Scott topology $\sigma (L)$ on a complete lattice $L$ are well-filtered. But we do not know whether any order-compatible $d$-topology on $L$ is well-filtered. So we finally pose the following question.

\begin{question}\label{ques-complete-lattice-d-topology-WF} Is every order-compatible $d$-topology on a complete lattice a well-filtered topology? Or equivalently, is there a complete lattice $L$ and a topology $\upsilon (L) \varsubsetneqq \tau \varsubsetneqq \sigma (L)$ such that $(L, \tau)$ is not well-filtered?
\end{question}

\end{document}